\documentclass[a4paper,12pt]{extarticle}
%%%%%%%%%%%%%%%%%%%%%%%

\usepackage{latexsym}
\usepackage{amscd}
\usepackage{graphics}
\usepackage{amsmath}
\usepackage{amssymb}
\usepackage{bbold}
\usepackage{mathrsfs}
\usepackage{amsthm}
\usepackage{xcolor}
\usepackage{accents}
\usepackage{enumerate}
\usepackage{url}
\usepackage{tikz-cd}
\usetikzlibrary{decorations.pathreplacing}
\usepackage{marginnote}
\usepackage{hyperref}
\usepackage{multicol,tikz}
\usetikzlibrary{calc}
\usepackage{marvosym}

\usepackage{newpxtext} \usepackage[euler-digits]{eulervm}

%%%%%%%%%%%%%%%%%%%%%%
\theoremstyle{plain}
\newtheorem{theorem}{\sc Theorem}[section]
\makeatletter
\newcommand{\settheoremtag}[1]{% \settheoremtag{<tag>}
  \let\oldthetheorem\thetheorem% Store \thetheorem
  \renewcommand{\thetheorem}{#1}% Redefine it to a fixed value
  \g@addto@macro\endtheorem{% At \end{theorem}, ...
    \addtocounter{theorem}{0}% ...restore theorem counter value and...
    \global\let\thetheorem\oldthetheorem}% ...restore \thetheorem
  }
\newtheorem{prop}[theorem]{\sc Proposition}
\newtheorem{lem}[theorem]{\sc Lemma}

\theoremstyle{definition}
\newtheorem{defn}[theorem]{\sc Definition}

\newtheorem{qu}[theorem]{\sc Question}
\newtheorem{ex}[theorem]{\sc Example}

%%%%%%%%%%%%%%%%%%%%

\DeclareMathOperator{\R}{\mathbb{R}}
\newcommand{\abs}[1]{\left\lvert#1\right\rvert}
\newcommand{\norm}[1]{\left\lVert#1\right\rVert}
\newcommand{\op}[1]{\operatorname{#1}}

\numberwithin{equation}{section}

%%%%%%%%%%%%%%%%%%%%%%%%%
\allowdisplaybreaks

%%%%%%%%%%%%%%%%%%%%

\title{A survey on orderability and contact non-squeezing}
\author{Igor Uljarevi\'c}
\date{September, 2024}

\usepackage{biblatex}
\addbibresource{document.bib}

\begin{document}
% created the project on May 8, 2024

\maketitle

\begin{abstract}
The present article provides an overview of Yakov Eliashberg's seminal contributions to the concepts of orderability and contact non-squeezing. It also examines subsequent research by various authors, highlighting the significance of these notions and offering a detailed account of the current state of the field.
\end{abstract}

This survey article about orderability and contact non-squeezing is a part of the Celebratio volume honouring the work of Yakov Eliashberg. The concept of orderability in contact geometry was introduced by Eliashberg and Polterovich \cite{eliashberg2000partially} in an attempt to find an analogue of the Hofer metric in contact geometry. A couple of years after the notion of orderability was introduced, Eliashberg, Kim, and Polterovich discovered a connection between orderability and a rigidity phenomenon in contact geometry that is now known as contact non-squeezing \cite{eliashberg2006geometry}. The first instance of contact non-squeezing, however, dates back to a 1991 article by Eliashberg in which the solid tori in the standard $\mathbb{S}^3$ were classified up to a contactomorphism\cite{eliashberg1991new}. 

The seminal papers \cite{eliashberg2000partially} and \cite{eliashberg2006geometry} led to intense research over the past 20 years that the present article gives an overview of.  For the sake of clarity and brevity, we focus on geometric results,  only briefly mentioning the techniques and methods used. The exposition of this article does not follow chronological development of the area. We start with the concept of contact non-squeezing, which is intuitive even without formal mathematical training, and motivate the orderability by its connection with contact non-squeezing. We then proceed to develop topics related to orderability, such as its relation with quasimorphisms, overtwistedness, and orderability of isotopy classes of Legendrians.

\section{Motivation behind contact non-squeezing}

When approaching an uncharted territory, an unknown realm, or a new mysterious object, it is natural (and, perhaps, only reasonable), to attempt to relate it to something familiar, something well understood. In doing so, it is also quite reasonable to seek the features in which the new unknown differs from the old known and to recognise the properties that the unknown and the known have in common. This general abstract principle, applied to contact geometry (unknown) and smooth topology (known\footnote{Admittedly, considering many mysteries still present in smooth topology, calling it  \emph{known} seems inappropriate. We do so here, nevertheless, understanding `known' in relative terms. }), sparks a growing interest in flexibility and rigidity phenomena in contact geometry. Are contact manifolds almost as flexible as smooth manifolds or do they exhibit rigidity much like Riemannian manifolds? The notion of size is a good testing ground for rigidity versus flexibility, for the size does not exist in the eyes of smooth topology. In contact geometry, the notion of size can be conveniently addressed by \emph{contact non-squeezing}. The precise definition of contact non-squeezing will be given in the next section. Intuitively, contact non-squeezing allows one to compare subsets of contact manifolds. In other words, using contact non-squeezing one is able to make sense of what it means that one subset is bigger than the other in contact geometry.

Contact non-squeezing is to a great extent also motivated by the success of Gromov's non-squeezing in symplectic geometry \cite{gromov1985pseudo}. It is clear, however,  that symplectic geometry sees the size because, as opposed to contact manifolds, every symplectic manifold has a natural volume form. 
The role of symplectic non-squeezing is in showing that  size in symplectic geometry is not completely determined by the volume and that symplectic geometry cannot be reduced to the volume-preserving geometry.

\section{Definition of contact non-squeezing and first examples}
The notion of contact non-squeezing was introduced in the seminal paper \cite{eliashberg2006geometry} by Eliashberg, Kim, and Polterovich. 

\begin{defn}[Eliashberg-Kim-Polterovich]\label{def:squeezing}
Let $\Omega_1$ and $\Omega_2$ be open subsets of a contact manifold $M$. The subset $\Omega_1$ can be (contactly) squeezed into the subset $\Omega_2$ if there exists a contact isotopy $\phi_t: \overline{\Omega}_1\to M, t\in[0,1]$ such that $\phi_0$ is equal to the identity and such that $\phi_1(\overline{\Omega}_1)\subset\Omega_2.$
\end{defn}

\begin{figure}
	\begin{center}
		\includegraphics[scale=0.7]{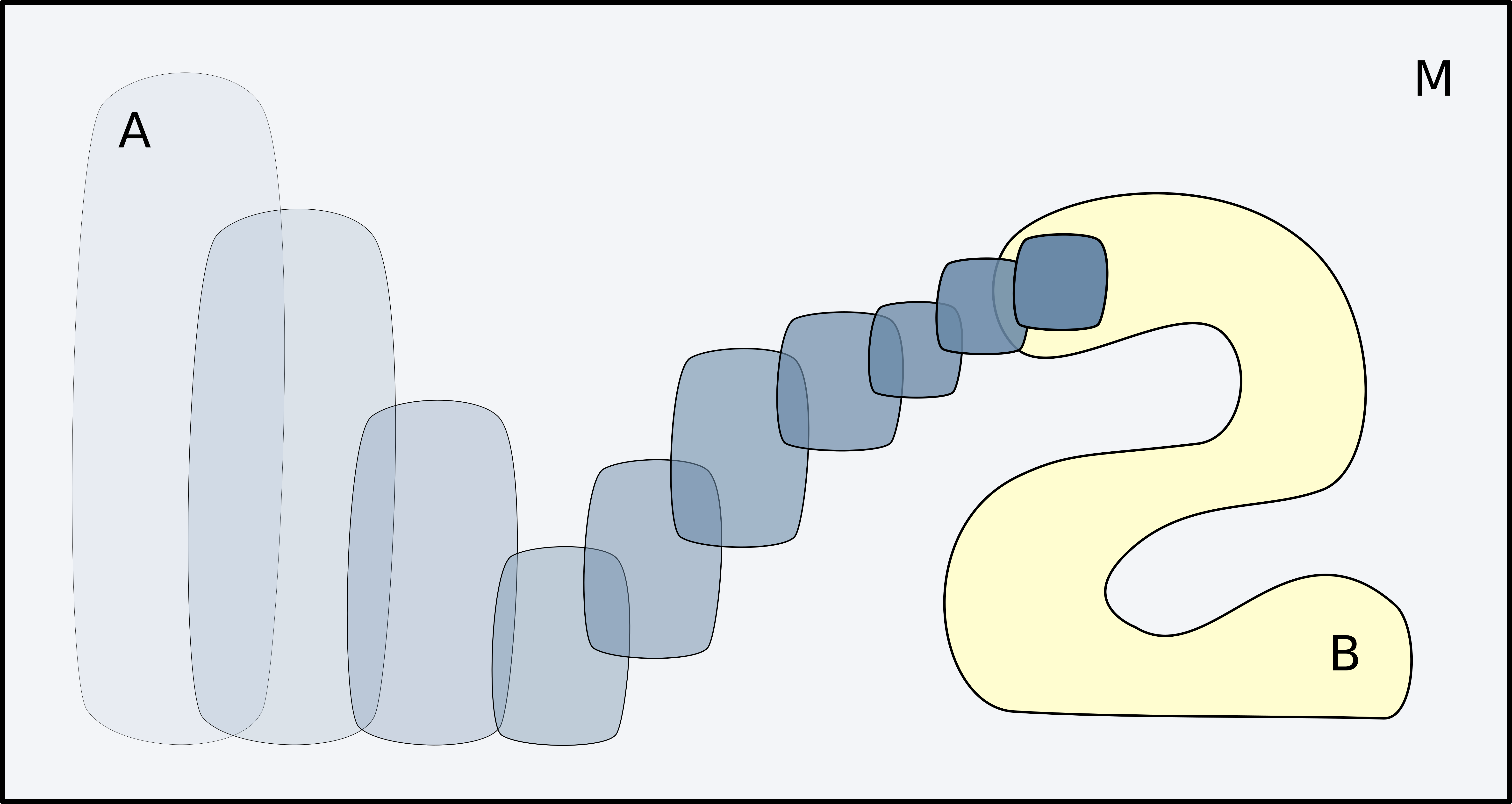}
	\end{center}
	\caption{Squeezing of $A$ into $B$}
\end{figure}
	
In Definition~\ref{def:squeezing}, $\overline{\Omega}$ denotes the closure of the subset $\Omega$. If $\overline{\Omega}_1$ is a compact set, then the contact isotopy $\phi_t$ from Definition~\ref{def:squeezing} extends to a compactly supported isotopy of $M$. 
The following example shows that the contact non-squeezing in the standard contact $\R^{{2n+1}}$  does not differ from the non-squeezing in the smooth category.   
\begin{ex}\label{ex:standardR}
Consider $\R^{2n+1}$ with the standard contact structure \[\xi:=\ker\left(  dz + \sum_{j=1}^n x_jdy_j\right).\]
The map $\phi_s: \R^{2n+1}\to\R^{2n+1}$ defined by $\phi_s(x,y,z)= (sx, sy, s^2z)$ is a contactomorphism for all $s\not=0.$ 
\end{ex}
In fact, since contact manifolds are all modelled on the standard contact $\R^{2n+1},$ the example above actually shows that locally contact geometry is as oblivious to size as smooth topology is. More precisely, any subset of a contact manifold whose closure is contained in a contact Darboux chart can be contactly squeezed into an arbitrary non-empty open subset. An extreme situation, along these lines, is that of the standard contact sphere where the contact Darboux chart covers everything but a single point. 
Thus, the contact squeezing on the standard contact sphere is also trivial. The precise formulation is given in the following example.

\begin{ex}
In the standard contact sphere $\mathbb{S}^{2n+1}$, any non-dense subset can be contactly squeezed into any non-empty open subset.
\end{ex}

The most basic examples of contact manifolds do not admit non-trivial contact non-squeezing. In addition, contact non-squeezing is trivial on a small scale for every contact manifold. It is, therefore, a surprise that there exist contact manifolds that exhibit non-trivial contact non-squeezing. 

\section{Non-squeezing on prequantization bundles}

The first example of a contact non-squeezing that is not also topological non-squeezing was discovered by Eliashberg \cite{eliashberg1991new} fifteen years before the formal introduction of the notion of contact non-squeezing. In \cite{eliashberg1991new}, Eliashberg classified the solid tori in the standard contact $\mathbb{S}^3$ up to a contactomorphism using the shape invariant. The following theorem is a direct consequence of this result.

\begin{theorem}[Eliashberg]\label{thm:n=1}
If $R>r>0$ then the set $B(R)\times\mathbb{S}^1$ cannot be squeezed inside $B(r)\times\mathbb{S}^1$ by a compactly supported contact isotopy of $\R^2\times\mathbb{S}^1$.
\end{theorem}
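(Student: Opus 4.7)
The plan is to extract a contact invariant of the solid torus $B(r)\times\mathbb{S}^1$ that monotonically detects $r$ and is preserved under ambient contact isotopies of $\R^2\times\mathbb{S}^1$. Following Eliashberg's 1991 paper, the relevant invariant is the \emph{shape} of the pre-Lagrangian boundary torus $T_r := \partial B(r)\times\mathbb{S}^1$, which in dimension three reduces to the slope of its characteristic foliation measured in an intrinsic framing. A direct computation with $\alpha = d\theta + \tfrac{1}{2}(x\,dy-y\,dx)$ and polar coordinates $(\varphi,\rho)$ on $\R^2$ gives $\alpha|_{T_r} = d\theta + \tfrac{r^2}{2}d\varphi$, so the characteristic foliation on $T_r$ is linear of slope $-r^2/2$ in the standard meridian--longitude basis of $H_1(T_r;\mathbb{Z})$, and the map $r \mapsto -r^2/2$ is strictly monotone.

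I would then assume for contradiction that a compactly supported contact isotopy $\phi_t$ of $\R^2\times\mathbb{S}^1$ satisfies $\phi_1(\overline{B(R)\times\mathbb{S}^1}) \subset B(r)\times\mathbb{S}^1$ with $R>r$, and consider the image $\phi_1(T_R)$. This is a pre-Lagrangian torus inside $B(r)\times\mathbb{S}^1$ whose interior solid region is contactomorphic to the standard $B(R)\times\mathbb{S}^1$. Because $\phi_1$ is isotopic to the identity through compactly supported contactomorphisms, the natural meridian--longitude framing on $\phi_1(T_R)$ coincides, up to ambient isotopy, with the pushforward of the framing on $T_R$; consequently the slope $-R^2/2$ is the shape of $\phi_1(T_R)$ relative to the ambient contact manifold. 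Thus the shape is rigidly preserved.

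The main technical obstacle is converting the preserved shape $-R^2/2$ into a contradiction with the containment $\phi_1(T_R)\subset B(r)\times\mathbb{S}^1$. Concretely, one analyses the thickened torus $W = B(r)\times\mathbb{S}^1 \setminus \phi_1(\overline{B(R)\times\mathbb{S}^1})$, which carries a tight contact structure whose boundary slopes $-R^2/2$ (inner) and $-r^2/2$ (outer) are in the ``wrong order'' compared to the monotone slope path $\rho \mapsto -\rho^2/2$ realised by the standard concentric tori; equivalently, no pre-Lagrangian torus inside $B(r)\times\mathbb{S}^1$ can bound a solid region contactomorphic to the standard $B(R)\times\mathbb{S}^1$ when $R>r$. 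Turning this order-violation into a rigorous obstruction---via the characteristic foliation analysis underlying Eliashberg's classification of solid tori in $\mathbb{S}^3$, or, in modern language, via the Giroux--Honda classification of tight contact structures on thickened tori---is the substantive content of the proof and relies on genuine three-dimensional contact topology rather than elementary area arguments.
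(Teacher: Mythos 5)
Your proposal follows essentially the same route the paper points to: Theorem~\ref{thm:n=1} is stated there as a direct consequence of Eliashberg's classification of solid tori via the shape invariant \cite{eliashberg1991new}, and you correctly identify that invariant---the slopes of pre-Lagrangian tori in the prescribed isotopy class---as the mechanism that detects $r$ and survives ambient contact isotopy. Two remarks. First, a normalization slip: with the paper's convention $B(r)=\{\pi\abs{z}^2<r\}$, the restriction of $d\theta+\tfrac12(x\,dy-y\,dx)$ to $\partial B(r)\times\mathbb{S}^1$ is $d\theta+\tfrac{r}{2\pi}\,d\varphi$ rather than $d\theta+\tfrac{r^2}{2}\,d\varphi$; monotonicity in $r$ is unaffected. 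Second, and more substantively, the step you explicitly defer---that no pre-Lagrangian torus in the standard isotopy class inside $B(r)\times\mathbb{S}^1$ can realize a slope corresponding to a radius $\geqslant r$, i.e.\ the computation that the shape of $B(r)\times\mathbb{S}^1$ is exactly the open interval of slopes below $r$---is the entire content of the theorem; everything preceding it (invariance of the slope under a compactly supported ambient isotopy, monotonicity of the slope in $r$) is soft. So what you have is a correct outline of Eliashberg's argument rather than a proof: to close it you would need to either reproduce the shape computation of \cite{eliashberg1991new} or carry out in detail the analysis of the thickened torus $W$ you sketch, proving that boundary slopes of such a region embedded standardly in $\R^2\times\mathbb{S}^1$ must respect the monotone order.
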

In the theorem, $B(r)$ denotes the ball $\{\pi\abs{z}^2<r\}$ of radius $\sqrt{\frac{r}{\pi}}$ in $\R^2$, and $\R^2\times\mathbb{S}^1$ is the contact manifold obtained by quotienting the standard contact $\R^3$ by the $\mathbb{Z}$-action $k\bullet (x,y,z):=(x,y,z+k)$. Obviously, the theorem describes a purely contact-geometric phenomenon because $B(R)\times\mathbb{S}^1$ can be smoothly squeezed into $B(r)\times\mathbb{S}^1$ for any choice of positive numbers $R$ and $r$. The ideas from \cite{eliashberg1991new} are further developed in \cite{cant2024contact}. 

Theorem~\ref{thm:n=1} is to some extent misleading. Namely, it is only in dimensions greater than three that quantum-like behaviour of contact non-squeezing emerges. In higher dimensions, there exists a threshold that separates flexibility from rigidity. If $R$ is less than the threshold, then $B(R)\times\mathbb{S}^1$ can be contactly squeezed as much as we please. On the other hand, if $R$ is not smaller than the threshold, then $B(R)\times\mathbb{S}^1$ cannot be squeezed even in itself. 

\begin{theorem}[Eliashberg-Kim-Polterovich, Chiu]\label{thm:ngeq2}
Let $n>1$ be an integer and let $R\geqslant r>0$. Then the set $B(R)\times\mathbb{S}^1\subset \R^{2n}\times\mathbb{S}^1$ can be squeezed inside $B(r)\times\mathbb{S}^1$ if, and only if, $R<1.$
\end{theorem}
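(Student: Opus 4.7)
The plan is to prove the two directions separately: the easy ``if'' direction (flexibility for $R<1$) and the hard ``only if'' direction (rigidity for $R\geq 1$). For the ``if'' direction I would produce an explicit compactly supported contact isotopy of $\R^{2n}\times\mathbb{S}^1$ sending $\overline{B(R)}\times\mathbb{S}^1$ into $B(r)\times\mathbb{S}^1$ for any $0<r\leq R<1$. The natural ansatz is a contact Hamiltonian of the form $H(x,y,z)=h(\pi(|x|^2+|y|^2))$ for a suitable smooth compactly supported profile $h$. On the prequantization bundle $\R^{2n}\times\mathbb{S}^1$ such rotationally symmetric Hamiltonians generate a flow that simultaneously rotates each radial disk and translates the $\mathbb{S}^1$-coordinate by an amount controlled by $h$ and $h'$; when $R<1$ one can arrange the $\mathbb{S}^1$-translation to stay strictly below one full fiber period on $\overline{B(R)}\times\mathbb{S}^1$, which is precisely what leaves room for a genuine contraction rather than a mere wrap-around. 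Concatenating with a radial shrinking in a Darboux chart then delivers the squeezing into $B(r)\times\mathbb{S}^1$.

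For the ``only if'' direction the plan is to exhibit a contact-geometric capacity $c$ assigning to each admissible open $\Omega\subset\R^{2n}\times\mathbb{S}^1$ a nonnegative number (possibly $+\infty$) with two features: (i) monotonicity under squeezing, i.e., if $\Omega_1$ squeezes into $\Omega_2$ then $c(\Omega_1)\leq c(\Omega_2)$; and (ii) $c(B(R)\times\mathbb{S}^1)\geq 1$ for every $R\geq 1$, while $c(B(r)\times\mathbb{S}^1)<1$ for every $r<1$. Given such a $c$, the theorem is immediate: a squeezing with $R\geq 1>r$ would force $1\leq c(B(R)\times\mathbb{S}^1)\leq c(B(r)\times\mathbb{S}^1)<1$, which is absurd.

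The real work, and the principal obstacle, is the construction of $c$. For $R>1$ (strict inequality) such a capacity can be built from Floer/contact-homology invariants of admissible Hamiltonians, or equivalently from Traynor-Viterbo-style generating-function invariants; this is the route of Eliashberg-Kim-Polterovich and handles the entire non-critical regime. The heart of the theorem is the critical case $R=1$: continuity and openness arguments intrinsic to Floer theory typically yield only strict inequalities and leave the endpoint inaccessible. Chiu's contribution is a finer capacity defined through the Tamarkin category of sheaves on a cotangent bundle, where singular support in the Kashiwara-Schapira sense plays the role of a sharp ``action spectrum''. Following Chiu, I would define $c(\Omega)$ as the infimum of $z$-translation amounts under which a Tamarkin-theoretic projector naturally associated with $\Omega$ survives, prove monotonicity via the Guillermou-Kashiwara-Schapira sheaf-quantization of compactly supported contact isotopies, and compute $c(B(R)\times\mathbb{S}^1)$ by an explicit Fourier-type calculation to recover the sharp threshold at $R=1$.
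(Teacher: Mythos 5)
Your split into flexibility ($R<1$) and rigidity ($R\geqslant1$) matches the structure of the result, but both halves have genuine gaps. The flexibility direction as you set it up cannot work: a contact Hamiltonian of the form $H_t(x,y,z)=h_t\bigl(\pi(|x|^2+|y|^2)\bigr)$ is invariant under the unitary rotations of $\mathbb{C}^n$, so its flow preserves every level $\{\pi|w|^2=\mathrm{const}\}$ and hence every solid torus $B(\rho)\times\mathbb{S}^1$; it can never move $\overline{B(R)}\times\mathbb{S}^1$ off itself. The subsequent ``radial shrinking in a Darboux chart'' is also unavailable, since $B(R)\times\mathbb{S}^1$ wraps around the $\mathbb{S}^1$ factor and is not contained in any Darboux chart. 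More fundamentally, any argument of this elementary kind would apply verbatim for $n=1$, where Theorem~\ref{thm:n=1} forbids all squeezing, so a correct proof must use something special to $n\geqslant2$. That ingredient is the existence of a \emph{contractible positive loop} of contactomorphisms of $\mathbb{S}^{2n-1}$ (Lemma~\ref{lem:EposloopS}, after Olshanskii), which exists only for $n\geqslant2$: one encodes $B(R)\times\mathbb{S}^1$ as the fibrewise starshaped domain $U(\varphi)$ of the positive path $t\mapsto e^{-2\pi i t/R}$, composes this path with the contractible positive loop (positivity of the composition is exactly where the condition $R<1$, i.e.\ $\frac1R>\mu$, enters), and invokes the fact that homotopic positive paths with fixed endpoints give contact-isotopic domains. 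None of this is visible in your ansatz.

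For the rigidity direction your monotone-capacity scheme is the right shape, but you have misplaced the difficulty. The Eliashberg--Kim--Polterovich obstruction (Theorem~\ref{thm:cylinder}) rules out squeezing precisely when the interval $[r,R]$ contains an integer; in particular the case $R=1$ is already covered by their contact-homology argument, since $1\in[r,1]$. What they do \emph{not} cover, and what required Chiu's sheaf-theoretic invariant (alternatively Fraser's SFT argument, or Fraser--Sandon--Zhang), are the intervals with $m<r\leqslant R<m+1$ for integers $m\geqslant1$ --- for instance squeezing $B(1.5)\times\mathbb{S}^1$ into $B(1.2)\times\mathbb{S}^1$. So the claim that Floer-theoretic capacities ``handle the entire non-critical regime'' for $R>1$ is incorrect, and the Tamarkin-category construction you defer is not a refinement at a single endpoint but the entire content of the theorem on those intervals; deferring it leaves the rigidity half unproved. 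As written, neither half of the proposal constitutes a proof.
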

The number 1 has a prominent role in Theorem~\ref{thm:ngeq2}. It is the threshold separating flexibility and rigidity. The reason why 1 is the threshold and not some other number is related to the size of the fibre $\mathbb{S}^1$. In our conventions, the contact structure on $\mathbb{R}^{2n}\times\mathbb{S}^1$ is furnished by a contact form whose Reeb flow has period 1. A substantial part of Theorem~\ref{thm:ngeq2} was proved by Eliashberg, Kim, and Polterovich in \cite{eliashberg2006geometry}. They showed that $B(R)\times\mathbb{S}^1$ can be squeezed into $B(r)\times\mathbb{S}^1$ if $R<1$. They also showed the following non-squeezing result that, in particular, implies that $B(R)\times\mathbb{S}^1$ cannot be squeezed into $B(r)\times\mathbb{S}^1$ if  the interval $[r, R]$ contains an integer. In the next theorem, $C(r)$ denotes the symplectic cylinder
\[C(r):= B(r)\times\R^{2n-2}\subset \R^2\times\R^{2n-2}.\]

\begin{theorem}[Eliashberg-Kim-Polterovich]\label{thm:cylinder}
Let $R\geqslant r>0$ be such that the interval $[r, R]$ contains an integer. Then, there does not exist a compactly supported contactomorphism $\phi:\R^{2n}\times\mathbb{S}^1\to\R^{2n}\times\mathbb{S}^1$ that maps the closure of $B(R)\times\mathbb{S}^1$ into $C(r)\times\mathbb{S}^1$.
\end{theorem}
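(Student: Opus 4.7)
The plan is to obstruct the putative contactomorphism by a Floer-theoretic invariant $\mathrm{CH}(\cdot)$ attached to open subsets of $\R^{2n}\times\mathbb{S}^1$ whose defining feature is sensitivity to the period-$1$ Reeb flow on the prequantization. My approach follows the original strategy of Eliashberg--Kim--Polterovich: lift a compactly supported contactomorphism to an equivariant compactly supported Hamiltonian diffeomorphism of the symplectization, then use symplectic field theory to extract an invariant that distinguishes $B(R)\times\mathbb{S}^1$ from $C(r)\times\mathbb{S}^1$.

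First, I would define, for a relatively compact open set $U\subset\R^{2n}\times\mathbb{S}^1$ with sufficiently tame boundary, a filtered contact homology $\mathrm{CH}_*(U)$. The generators are constant and iterated Reeb orbits of admissible contact Hamiltonians supported near $\partial U$, with slopes encoding the Reeb flow, and the differential counts pseudoholomorphic cylinders in the symplectization. The main structural property to establish is functoriality: a compactly supported contactomorphism of $\R^{2n}\times\mathbb{S}^1$ carrying $\overline{U}$ into $V$ induces a well-defined map $\mathrm{CH}_*(V)\to \mathrm{CH}_*(U)$ compatible with action filtrations.

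Second, I would compute both invariants on the model domains. For $U=B(R)\times\mathbb{S}^1$ the admissible Hamiltonians are radial in the ball factor with slope proportional to $R$; their $1$-periodic orbits wrap iterates of the Reeb circle, and the crucial observation is that a new generator (creating a nontrivial class after taking homology) appears each time the slope crosses an integer, producing a nonzero class in a degree tied to $\lfloor R\rfloor$. For $V=C(r)\times\mathbb{S}^1$ the extra $\R^{2n-2}$ factor allows one to displace all admissible orbits to infinity by translating along that factor, so the corresponding Floer complex is acyclic and $\mathrm{CH}_*(C(r)\times\mathbb{S}^1)=0$. Since the hypothesis that $[r,R]$ contains an integer ensures the nonvanishing class for $B(R)\times\mathbb{S}^1$ sits in a range that must survive any valid functorial map, the conclusion follows from naturality.

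The main obstacle is the second step, and within it the compactness of moduli spaces of Floer cylinders in the symplectization, made delicate by the non-compactness of $C(r)$ in the $\R^{2n-2}$ direction. Controlling trajectories requires either a priori $C^0$-estimates driven by a carefully engineered class of admissible Hamiltonians, or passage to a generating-function or sheaf-theoretic reformulation (as later developed by Chiu and Fraser) where functoriality and vanishing become more transparent. A secondary difficulty is transversality and the choice of grading in equivariant contact homology, which in the Eliashberg--Kim--Polterovich approach is handled by passing to the $\mathbb{Z}$-cover $\R^{2n}\times\R$ and working equivariantly with respect to the deck transformation generated by the Reeb shift.
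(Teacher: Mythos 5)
The survey states this theorem without proof, attributing it to \cite{eliashberg2006geometry}, so your proposal has to be measured against the contact-homology argument given there. Your overall architecture is the right one --- a filtered contact homology of (fibrewise star-shaped) domains in $\R^{2n}\times\mathbb{S}^1$, natural under compactly supported contactomorphisms, computed on the two model domains and combined with functoriality --- and you correctly identify compactness of the moduli spaces and the passage to a contactomorphism that is merely compactly supported (rather than isotopic to the identity) as the analytic pressure points.

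The genuine gap is your computation for the cylinder. You claim $\mathrm{CH}_*(C(r)\times\mathbb{S}^1)=0$ because the $\R^{2n-2}$ factor lets you displace all orbits to infinity. This fails for three reasons. First, the displacing translation along $\R^{2n-2}$ is not compactly supported, and the invariant can only be natural with respect to compactly supported contactomorphisms; it cannot be invariant under arbitrary ones, since the contactomorphisms $F_N$ recalled after Theorem~\ref{thm:cylinder} squeeze $B(R)\times\mathbb{S}^1$ into arbitrarily small balls. Second, even genuine displaceability by compactly supported contact isotopies does not kill this invariant: every bounded domain, in particular $B(\rho)\times\mathbb{S}^1$ for small $\rho$, is displaceable in this sense, yet its contact homology is nonzero --- and it must be, or the entire obstruction scheme collapses. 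Third, if the cylinder's homology vanished identically while a class of $B(R)\times\mathbb{S}^1$ ``must survive any valid functorial map,'' the hypothesis that $[r,R]$ contains an integer would do no work, and for $R<1$ you would contradict the flexibility half of Theorem~\ref{thm:ngeq2}, since there $B(R)\times\mathbb{S}^1$ does squeeze into $B(r)\times\mathbb{S}^1\subset C(r)\times\mathbb{S}^1$. In the actual proof the cylinder's contact homology is computed from its Morse--Bott orbit structure and is nonzero; the obstruction is a degree-by-degree bookkeeping in which the integer $m\in[r,R]$ selects a degree where an inclusion-induced map through $\mathrm{CH}(B(R)\times\mathbb{S}^1)$ is provably nonzero (because $m\leqslant R$) while $\mathrm{CH}(C(r)\times\mathbb{S}^1)$ vanishes in that particular degree (because $m\geqslant r$). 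Establishing that nonzero composition and the degree-selective vanishing for the cylinder is the mathematical heart of the theorem, and it is exactly the step your sketch leaves as a black box.
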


The rest of Theorem~\ref{thm:ngeq2} was completed by Chiu \cite{chiu2017nonsqueezing} using microlocal analysis. Theorem~\ref{thm:ngeq2} established itself as an important milestone in the study of contact geometry. There are several works that test whether certain techniques used in contact geometry are powerful enough to detect phenomena described by Theorem~\ref{thm:ngeq2}. For instance, Sandon \cite{sandon2011contact} used generating functions to prove the non-squeezing of Theorem~\ref{thm:ngeq2} for the radii $R$ and $r$ that are separated by an integer. The same is proved by Albers and Merry \cite{albers2018orderability} and by Cant and the author \cite{cant2024selective} using respectively the Rabinowitz Floer homology and a local version of symplectic homology. In \cite{fraser2016contact}, Fraser proved a part of Theorem~\ref{thm:ngeq2} not covered in \cite{eliashberg2006geometry} using SFT techniques, thus providing an alternative, more in line with the methods of  \cite{eliashberg2006geometry}, to Chiu's sheaf-theoretic proof.
Yet another alternative to Chiu's proof is presented in \cite{fraser2023contact}, as a continuation to \cite{sandon2011contact},  by Fraser, Sandon, and Zhang.  

Theorem~\ref{thm:cylinder} extends Theorem~\ref{thm:ngeq2} in two ways:
\begin{enumerate}
	\item it obstructs squeezing of $B(R)\times\mathbb{S}^1$ into $C(r)\times\mathbb{S}^1$, which is bigger than $B(r)\times\mathbb{S}^1$,
	\item it proves non-squeezing by a compactly supported contactomorphism of $\R^{2n}\times\mathbb{S}^1$, as opposed to a compactly supported contact isotopy. 
\end{enumerate}
A squeezing by a compactly supported contactomorphism is occasionally called \emph{coarse} \cite{fraser2016contact,fraser2023contact}. It is not clear whether there is a difference between coarse and non-coarse squeezing in $\R^{2n}\times\mathbb{S}^1$ at all. This issue is related to the following important open question.
\begin{qu}
Does there exist a compactly supported contactomorphism $\R^{2n}\times\mathbb{S}^1\to \R^{2n}\times\mathbb{S}^1$ that is not isotopic to the identity through compactly supported contactomorphisms?
\end{qu}

The following example shows that the conditions about contact supports of contactomorphisms and contact isotopies in Theorems~\ref{thm:ngeq2} and \ref{thm:cylinder} are essential.
\begin{ex}[Proposition~1.24 in \cite{eliashberg2006geometry}]
Let $N\in\mathbb{N}$ be a positive integer. After identifying $\R^{2n}$ with $\mathbb{C}^n$ and $\mathbb{S}^1$ with $\R/\mathbb{Z}$, denote by $F_N:\mathbb{R}^{2n}\times\mathbb{S}^1\to \mathbb{R}^{2n}\times\mathbb{S}^1$ the map
\[F_N(z,\theta):= \left(  \frac{e^{{2\pi Ni\theta}}}{\sqrt{1 + N\pi\abs{z}^2}}\cdot z , \theta \right).\]
Then, the map $F_N$ is a contactomorphism. For any given $R, r>0$ there exists $N\in\mathbb{N}$ such that the map $F_N$ squeezes $B(R)\times\mathbb{S}^1$ into $B(r)\times\mathbb{S}^1$.
\end{ex}

It is a natural question whether Theorem~\ref{thm:ngeq2} can be extended to prequantizations over symplectic manifolds other than the standard $\R^{2n}$.  This question is addressed by Eliashberg, Kim, and Polterovich in \cite[Proposition~1.26]{eliashberg2006geometry} and by Albers and Merry \cite[Theorem~1.22]{albers2018orderability} for the prequantizaton $M\times\mathbb{S}^1$ of a Liouville manifold $M$. Both of the results \cite[Proposition~1.26]{eliashberg2006geometry} and \cite[Theorem~1.22]{albers2018orderability} are somewhat abstract in that they involve Floer-theoretic conditions, respectively in terms of contact homology and in terms of  symplectic capacities, which are not always easily checked. There are no published results regarding prequantizations of closed symplectic manifolds as of the time of writing\footnote{After this survey article was submitted, Pierre-Alexandre Arlove posted a preprint on ArXiv \cite{arlove2024contact}, which proves contact non-squeezing in lens spaces.}. The author has been informed about two works in progress by Albers, Shelukhin, and Zapolsky \cite{zapolsky} and by Rizell and Sullivan \cite{rizell} that resolve the question of contact squeezing on prequantizations over rich classes of closed symplectic manifolds.
Along these lines, \cite{uljarevic2023selective} proves contact non-squeezing on homotopy spheres admitting a filling with large symplectic homology. These homotopy spheres include many Brieskorn manifolds, and among them the Ustilovsky spheres, which can be thought of as prequantizations of certain symplectic orbifolds. The results of \cite{uljarevic2023selective} have, to some extent, different flavour  in that they establish non-squeezing of a smoothly embedded ball as opposed to a solid torus $B(R)\times\mathbb{S}^1$.

Work of Serraille and Stojisavljevi\'c \cite{vukasin} shows strong indications that there exists a relation between contact non-squeezing and the Rokhlin property of the group of compactly supported contactomorphisms.

\section{The role of positive loops in contact squeezing}\label{sec:roleloops}

In this section, we discuss some details about the flexibility part of Theorem~\ref{thm:ngeq2}. Our primary goal is to emphasise the role played by positive loops of contactomorphisms in contact squeezing. A path of contactomorphisms of a cooriented contact manifold is called \emph{positive} if it is generated by a positive contact Hamiltonian. In particular, a loop of contactomorphisms is called \emph{positive} if it is positive as a path. Given a positive path $\varphi_t:\mathbb{S}^{2n-1}\to\mathbb{S}^{2n-1}$ of contactomorphisms furnished by a 1-periodic contact Hamiltonian $h_t:\mathbb{S}^{2n-1}\to\R^+$, one can associate to it a (fibrewise) star-shaped domain
\[U(\varphi):= \left\{(z,t)\:|\: H_t(z)<1\right\}\subset \mathbb{C}^n\times\mathbb{S}^1.\]
Here, $H_t:\mathbb{C}^n\to\R$ is a function given by 
\[H_t(z):= \left\{\begin{matrix} \norm{z}^2\cdot h_t\left( \frac{z}{\norm{z}}\right) & \text{for }z\not=0\phantom{.}\\ 0& \text{for }z=0.\end{matrix}\right.\]
The function $H_t$ can be thought of as the continuous extension to $\mathbb{C}^n$ of the $\R^+$-equivariant Hamiltonian on the symplectization $S\mathbb{S}^{2n-1}\approx\mathbb{C}^n\setminus\{0\}$ that corresponds to $h_t$. An important property of the correspondence $\varphi\leftrightarrow U(\varphi)$ is expressed by the following lemma \cite[Lemma~1.21]{eliashberg2006geometry}. It claims that homotopic positive paths, relative endpoints, furnish contact isotopic star-shaped domains.

\begin{lem}
Let $\varphi^s, s\in[0,1]$ be a homotopy of positive paths of contactomorphisms $\mathbb{S}^{2n-1}\to \mathbb{S}^{2n-1}$ with fixed endpoints. That is, for each $s$, $\varphi^s_t:\mathbb{S}^{2n-1}\to\mathbb{S}^{2n-1}$ is a positive path of contactomorphisms and $\varphi^s_0= \varphi_0^0, \varphi_1^s=\varphi_1^0$. Then, there exists a contact isotopy $\psi_s:\mathbb{C}^n\times\mathbb{S}^1\to\mathbb{C}^n\times\mathbb{S}^1$ such that $\psi_s(U(\varphi^0))= U(\varphi^s)$. 
\end{lem}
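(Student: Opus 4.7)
My approach is a Moser-type deformation argument: I realise $\psi_s$ as the flow of an $s$-dependent family of contact vector fields on $\mathbb{C}^n\times\mathbb{S}^1$ that infinitesimally drags the boundary $\partial U(\varphi^s)$ as $s$ varies. First I encode each domain by its defining function: the positive Hamiltonian $h^s_t:\mathbb{S}^{2n-1}\to\mathbb{R}^+$ generating $\varphi^s_t$ extends by $2$-homogeneity to
\[
H^s(z,t):=\norm{z}^2\, h^s_t\!\left(\tfrac{z}{\norm{z}}\right),\qquad H^s(0,t):=0,
\]
which is smooth off the zero section $\{z=0\}$. Positivity of $h^s_t$ ensures that $\{H^s=1\}=\partial U(\varphi^s)$ is a smooth cooriented hypersurface bounded away from $\{z=0\}$, and the fixed-endpoint hypothesis forces $\partial_s H^s$ to vanish at $t=0,1$, so the construction is compatible with the $\mathbb{S}^1$-periodicity in $t$.

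Next I derive the Moser equation. If $\psi_s$ is a contact isotopy with $\psi_0=\mathrm{id}$ generated by a contact vector field $X_s$, then $\psi_s(U(\varphi^0))=U(\varphi^s)$ is equivalent to $H^s\circ\psi_s\equiv 1$ on $\{H^0=1\}$. Differentiating in $s$ and evaluating on the moving boundary yields the pointwise equation
\[
\partial_s H^s+dH^s(X_s)=0\quad\text{on }\{H^s=1\}.
\]

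To realise this equation by a \emph{contact} vector field I use the bijection between contact Hamiltonians $K\in C^\infty(\mathbb{C}^n\times\mathbb{S}^1)$ and contact vector fields $X_K$. Pointwise, $X_K(p)$ can be made to equal any vector in $T_p(\mathbb{C}^n\times\mathbb{S}^1)$: its Reeb component is determined by $K(p)$ and its contact-hyperplane component by $dK|_{\xi}(p)$. Hence the map $K\mapsto dH^s(X_K)|_{\{H^s=1\}}$ is surjective, so there exists a smooth family $K_s$ solving the Moser equation along $\{H^s=1\}$. Multiplying by a cutoff supported in a tubular neighborhood of $\{H^s=1\}$ makes $K_s$ compactly supported (this is harmless because the hypersurfaces stay uniformly bounded away from $\{z=0\}$), and the flow $\psi_s$ of $X_{K_s}$ starting from $\psi_0=\mathrm{id}$ is a globally defined contact isotopy. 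By construction $\psi_s$ maps $\{H^0=1\}$ to $\{H^s=1\}$, and since it is a diffeomorphism varying continuously from the identity with the boundaries separating interiors from exteriors in each $t$-slice, $\psi_s(U(\varphi^0))=U(\varphi^s)$.

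The main technical point is the smooth solvability in $s$ of the boundary equation $dH^s(X_{K_s})=-\partial_s H^s$: one must check that the surjectivity of $K\mapsto dH^s(X_K)|_{\{H^s=1\}}$ admits a right inverse depending smoothly on $s$, and that the cutoff can be arranged so that the Moser equation is preserved exactly along the boundary. Both follow from the explicit description of $X_K$ in terms of $K$ in the prequantization $\mathbb{C}^n\times\mathbb{S}^1$, combining the Reeb direction $\partial_t$ with a vector field in the contact distribution transverse to $\{H^s=1\}$.
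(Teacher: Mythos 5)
The survey itself does not reproduce a proof of this lemma (it is quoted as Lemma~1.21 of \cite{eliashberg2006geometry}), so I am judging your argument on its own terms, and it has a genuine gap at the solvability step. The map $K\mapsto dH^s(X_K)|_{\{H^s=1\}}$ is \emph{not} surjective, and the pointwise surjectivity of $K\mapsto X_K(p)$ does not imply it: the values of $X_K$ along the hypersurface $\Sigma_s=\{H^s=1\}$ are all determined by the single function $K$, not freely prescribable. Passing to the symplectization, $X_K$ lifts to the Hamiltonian vector field of $\hat{K}=rK$, and $dH^s(X_K)|_{\Sigma_s}=\{\hat{K},\hat{H}^s\}=-d\hat{K}(X_{\hat{H}^s})$ depends only on the restriction of $K$ to $\Sigma_s$ and its derivative \emph{along} the characteristic foliation of $\Sigma_s$ (the normal derivative drops out because hyperplanes in a symplectic vector space are coisotropic). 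So your Moser equation is a transport (cohomological) equation along the characteristic foliation, whose solvability is obstructed along closed characteristics and, for irrational characteristic dynamics, by small divisors. The argument as written also never genuinely uses the fixed-endpoint hypothesis: the implication ``fixed endpoints $\Rightarrow\partial_sH^s$ vanishes at $t=0,1$'' is false, since the time-one map depends on the whole path and not on the value of the Hamiltonian at the endpoint times. Applied to the family of constant Hamiltonians $h^s\equiv\pi/\rho_s$ with $\rho_s$ decreasing from $R$ to $r$, your construction would produce a contact isotopy of $\mathbb{C}^n\times\mathbb{S}^1$ carrying $B(R)\times\mathbb{S}^1$ onto $B(r)\times\mathbb{S}^1$ for arbitrary $R>r\,$, contradicting Theorem~\ref{thm:ngeq2}; for $\rho$ rational the leaves of the characteristic foliation of $\partial(B(\rho)\times\mathbb{S}^1)$ are closed and $\int_\gamma\partial_sH^s\neq0$ exhibits the obstruction explicitly.

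The hypothesis of a homotopy \emph{rel endpoints} through \emph{positive} paths must therefore enter the construction of the vector field, not merely the bookkeeping. In \cite{eliashberg2006geometry} the isotopy $\psi_s$ is manufactured directly from the two-parameter family $\varphi^s_t$ --- exploiting that, because the endpoints are fixed, $\varphi^s_t\circ(\varphi^0_t)^{-1}$ is for each $s$ a contractible loop based at the identity, and that positivity keeps the associated domains fibrewise star-shaped --- rather than obtained from an abstract surjectivity claim. If you wish to retain a Moser-type presentation, you must produce a particular solution $K_s$ of the transport equation coming from this structure and verify that the compatibility conditions along closed characteristics hold for this special right-hand side; they fail for a general family of star-shaped domains, which is precisely the content of the non-squeezing theorems discussed in this survey.
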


Another ingredient required for the proof that $B(R)\times\mathbb{S}^1$ can be squeezed into itself when $R<1$ is the existence of a contractible positive loop of contactomorphisms $\mathbb{S}^{2n-1}\to\mathbb{S}^{2n-1}$ for $n\geqslant 2$.

\begin{lem}\label{lem:EposloopS}
Let $n\geqslant 2$. Then, there exists a contractible positive loop of contactomorphisms $\varphi_t:\mathbb{S}^{2n-1}\to\mathbb{S}^{2n-1}$.
\end{lem}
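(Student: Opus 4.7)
The plan is to take the obvious candidate---the Reeb loop $\varphi_t(z) = e^{2\pi i t} z$ on $\mathbb{S}^{2n-1} \subset \mathbb{C}^n$, generated by the constant (hence positive) contact Hamiltonian $h \equiv 1$---and show it is null-homotopic in $\operatorname{Cont}(\mathbb{S}^{2n-1}, \xi_{\mathrm{std}})$ whenever $n \geq 2$. Since positivity of the loop is automatic, the whole content lies in this contractibility. The subtlety is that inside the finite-dimensional subgroup $U(n) \hookrightarrow \operatorname{Cont}(\mathbb{S}^{2n-1})$, the Reeb loop $\{e^{2\pi i t}I\}$ has non-trivial class $n \in \mathbb{Z} \cong \pi_1(U(n))$, read off from the determinant. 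Consequently, any contraction must genuinely leave the unitary subgroup and exploit the non-linear freedom of the full contactomorphism group.

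My strategy is to pass to the prequantization quotient $\pi\colon \mathbb{S}^{2n-1} \to \mathbb{C}P^{n-1}$, which is non-trivial precisely when $n \geq 2$. On the base I would choose a Hamiltonian circle action with \emph{strictly positive} moment map, e.g.\ $[z_1:\cdots:z_n]\mapsto[e^{i\theta}z_1:z_2:\cdots:z_n]$ after adding a suitable positive constant. Lifting to the prequantization bundle gives a positive contact path on $\mathbb{S}^{2n-1}$; by integrality of the bundle, tuning the additive constant makes the lift close up to an honest positive loop $\tilde\psi_t$. The plan is then to deform $\tilde\psi_t$ continuously to the constant loop through positive paths, by scaling the moment-map part of the generating contact Hamiltonian down to zero while simultaneously adjusting the additive constant so that the lifted path remains closed at every stage. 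A bookkeeping comparison of the endpoint, carried out inside $\pi_1(\operatorname{Cont}(\mathbb{S}^{2n-1}))$, should identify the class of $\tilde\psi_t$ with a positive multiple of the Reeb class; combining this with the explicit contraction of $\tilde\psi_t$ then forces the Reeb class itself to vanish, at which point the Reeb loop is the desired contractible positive loop.

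The main obstacle---and the step I expect to absorb most of the technical work---is running the interpolation through positive paths while simultaneously respecting two incompatible-looking constraints: closedness of the lift (an action-type condition on the fibre direction) and strict positivity of the generating contact Hamiltonian throughout. Keeping track of the correct homotopy class in $\pi_1(\operatorname{Cont})$ as the additive constant is varied is delicate because shrinking the constant through integer values corresponds to jumps in the winding around the Reeb circle. The dimensional hypothesis $n \geq 2$ enters essentially at exactly this point: only then does the base $\mathbb{C}P^{n-1}$ exist with its non-trivial Hamiltonian topology providing the room needed for the interpolation, and the moment-map Hamiltonian is non-constant so that there is something to deform. For $n=1$ the base degenerates to a point, the construction collapses, and indeed the Reeb loop on $\mathbb{S}^1$ is famously not contractible.
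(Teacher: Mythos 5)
Your strategy rests on the claim that the Reeb loop $\varphi_t(z)=e^{2\pi i t}z$ becomes contractible in $\operatorname{Cont}(\mathbb{S}^{2n-1},\xi_{\mathrm{std}})$ once $n\geqslant 2$. That claim is false: the Reeb loop of the standard contact sphere is non-contractible in the contactomorphism group for \emph{every} $n$ --- this is exactly the fact this survey invokes (citing \cite{casals2016chern}) when it remarks that the positive loop generated by the Reeb flow ``is never contractible''. The same obstruction rules out every loop your construction actually produces: the lift to $\mathbb{S}^{2n-1}$ of a Hamiltonian circle action on $\mathbb{C}P^{n-1}$ whose moment map has been shifted by an integer $c\geqslant 1$ (integrality is what makes the lift close up) is the unitary loop $\operatorname{diag}\bigl(e^{2\pi i(c+1)t},e^{2\pi i ct},\dots,e^{2\pi i ct}\bigr)$, whose class in $\pi_1(U(n))\cong\mathbb{Z}$ is $nc+1\neq 0$. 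More generally, positivity of any loop lying in $U(n)$ forces strictly positive winding of the determinant, and by the results behind \cite{casals2016chern} these unitary classes are detected in $\pi_1(\operatorname{Cont}(\mathbb{S}^{2n-1}))$. So no loop contained in the unitary (or toric) subgroup can be the contractible positive loop the lemma asserts, and the interpolation you propose cannot reach the constant loop without changing the homotopy class --- the integer jumps you flag as the ``main obstacle'' are precisely the record of this impossibility, not a technicality to be absorbed. (There is also a smaller logical slip at the end: from $k[\varphi]=0$ with $k>0$ one can only conclude $[\varphi]=0$ if $\pi_1(\operatorname{Cont})$ is torsion-free, which is not known; and had you really contracted a positive loop $\tilde\psi$, you would already be done and the Reeb loop would be irrelevant.)

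The loop whose existence the lemma asserts is necessarily a genuinely non-linear one, and the survey does not reprove the statement; it points to the two known arguments. One deduces it from Olshanskii's classification of invariant orderings on (covers of) simple Lie groups \cite{olshanski1982invariant}, applied to $\widetilde{SU(n,1)}$ acting on $\mathbb{S}^{2n-1}$ as the ideal boundary of complex hyperbolic space: for $n\geqslant 2$ the relevant invariant cone fails to induce a partial order, which translates into a contractible positive loop inside that finite-dimensional subgroup of $\operatorname{Cont}(\mathbb{S}^{2n-1})$. The other is the explicit, rather delicate construction in Section~3 of \cite{eliashberg2006geometry}, which is also what yields the stronger statement quoted later in the survey that the ideal boundary of $W\times\mathbb{C}^k$ is non-orderable for $k\geqslant 2$. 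If you want to salvage your approach, the prequantization picture is a fine way to manufacture \emph{positive} loops, but the contractibility has to come from a mechanism that genuinely leaves $U(n)$, not from an eventual vanishing of the Reeb class.
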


The lemma follows from the work of Olshanskii \cite{olshanski1982invariant}. An alternative proof can be found in \cite{eliashberg2006geometry}. Now, we sketch the proof that $B(R)\times\mathbb{S}^1\subset \mathbb{C}^n\times\mathbb{S}^1$ can be squeezed into itself for $n>2$ and for $R>0$ sufficiently small. This claim is a part of Theorem~\ref{thm:ngeq2}.

\begin{prop}
Let $n>2$ be an integer. Then, there exists a sufficiently small $R>0$ such that $B(R)\times\mathbb{S}^1\subset \mathbb{C}^n\times\mathbb{S}^1$ can be squeezed into itself.
\end{prop}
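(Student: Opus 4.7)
The plan is to exploit the correspondence $\varphi \leftrightarrow U(\varphi)$ and the homotopy lemma just stated, together with the contractible positive loop from Lemma~\ref{lem:EposloopS}. The key observation is that $B(R)\times\mathbb{S}^1$ equals $U(\alpha^R)$, where $\alpha^R_t$ is the positive path on $\mathbb{S}^{2n-1}$ generated by the constant contact Hamiltonian $h_t\equiv c_R:=\pi/R$, that is, the Reeb flow traversed at speed $c_R$; its time-$1$ map is the Reeb rotation $\rho_{c_R}$. Squeezing $B(R)\times\mathbb{S}^1$ into itself follows if one can produce a positive path $\beta_t$ with the same endpoints as $\alpha^R$, homotopic to $\alpha^R$ rel endpoints through positive paths, and satisfying $U(\beta)\subsetneq U(\alpha^R)$, because the homotopy lemma then yields a contact isotopy of $\mathbb{C}^n\times\mathbb{S}^1$ carrying $U(\alpha^R)$ onto the strictly smaller $U(\beta)$.

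To build $\beta$, fix the contractible positive loop $\varphi_t$ of Lemma~\ref{lem:EposloopS}, with Hamiltonian $k_t>0$, and a based contracting homotopy $\{\varphi^s_t\}_{s\in[0,1]}$ from $\varphi^0\equiv\mathrm{id}$ to $\varphi^1=\varphi$, with Hamiltonians $k^s_t$. Set $L:=\sup_{s,t,x}|k^s_t(x)|$, which is finite by compactness, and take $R>0$ small enough that $c_R>L$. Denoting the Reeb flow on $\mathbb{S}^{2n-1}$ by $\rho_s$, define the family
\[
\beta^s_t:=\rho_{c_R t}\circ\varphi^s_t,\qquad \beta_t:=\beta^1_t.
\]
A brief computation using the invariance of the contact form under the Reeb flow shows that $\beta^s_t$ is generated by the contact Hamiltonian $c_R+k^s_t\circ\rho_{-c_R t}$, which is bounded below by $c_R-L>0$. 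Hence each $\beta^s$ is a positive path, its endpoints are $\beta^s_0=\mathrm{id}$ and $\beta^s_1=\rho_{c_R}\circ\varphi^s_1=\rho_{c_R}$ (independent of $s$), and $\{\beta^s\}_{s\in[0,1]}$ is a homotopy of positive paths rel endpoints from $\beta^0=\alpha^R$ to $\beta$. Since $k_t>0$, the induced Hamiltonian on $\mathbb{C}^n$ satisfies $H^{\beta}_t(z)>c_R\norm{z}^2=H^{\alpha^R}_t(z)$ for every $z\neq 0$, whence $\overline{U(\beta)}\subset U(\alpha^R)$.

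Applying the homotopy lemma to $\{\beta^s\}$ produces a contact isotopy $\psi_s$ of $\mathbb{C}^n\times\mathbb{S}^1$, starting at the identity, with $\psi_1(U(\alpha^R))=U(\beta)$. Combined with the strict inclusion $\overline{U(\beta)}\subset B(R)\times\mathbb{S}^1$, the restriction of $\psi_s$ to $\overline{B(R)\times\mathbb{S}^1}$ is the desired squeezing. The main obstacle is pinning down the role of the size condition on $R$: the interpolating paths $\beta^s$ are positive precisely because the Reeb speed $c_R$ dominates the sup-norm bound $L$ on the contraction of $\varphi$, which forces the restriction $R<\pi/L$. Without this restriction, the natural homotopy $\{\beta^s\}$ would leave the class of positive paths and the homotopy lemma could not be invoked; the Hamiltonian calculation for the product $\rho_{c_R t}\circ\varphi^s_t$ is then the only remaining routine verification.
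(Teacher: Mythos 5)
Your argument is essentially the paper's own proof: you compose the contracting homotopy of the positive loop with the Reeb rotation at speed $c_R=\pi/R$, note that the interpolating paths stay positive once $c_R$ dominates the sup-norm of the contracting Hamiltonians, and invoke the homotopy lemma to carry $B(R)\times\mathbb{S}^1=U(\alpha^R)$ onto the relatively compact subset $U(\beta)$. The one point you pass over silently, which the paper explicitly flags, is that the contact Hamiltonian $c_R+k_t\circ\rho_{-c_Rt}$ of $\beta$ is in general not $1$-periodic in $t$ (since $\rho_{-c_R}\neq\operatorname{id}$ unless $c_R\in\mathbb{Z}$), so $U(\beta)$ is not, strictly speaking, a well-defined subset of $\mathbb{C}^n\times\mathbb{S}^1$ and the homotopy lemma does not apply verbatim; this is exactly the ``detail'' the survey defers to \cite[Section~2.1]{eliashberg2006geometry}.
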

\begin{proof}
The proof follows Section~1.7 from \cite{eliashberg2006geometry}. By Lemma~\ref{lem:EposloopS}, there exists a contractible positive loop $\varphi_t:\mathbb{S}^{2n-1}\to\mathbb{S}^{2n-1}$ of contactomorphisms. Let $\varphi_t^s, s\in[0,1]$ be a homotopy from the identity to $\varphi_t$. In particular, $\varphi_t^0$ is equal to the identity and $\varphi_t^1=\varphi_t$ for all $t$. Denote by $f^s$ the contact Hamiltonian of the contact isotopy $\varphi_t^s$ and by $F^s_t:\mathbb{C}^n\to\R$ the function given by
\[ F_t^s(z):= \left\{\begin{matrix} \norm{z}^2\cdot f^s_t\left( \frac{z}{\norm{z}}\right) & \text{for }z\not=0\phantom{.}\\ 0& \text{for }z=0.\end{matrix}\right. \]
There exists $\mu>0$ such that $F_t^s(z)>-\mu\pi\norm{z}^2$ for $z\not=0$. Define $E(z):= \pi\norm{z}^2.$ Denote by $\phi^H$ the Hamiltonian isotopy of the Hamiltonian $H$ and by $H\#G$ the Hamiltonian of the composition $\phi^H_t\circ \phi_t^G$.  The inequality
\begin{align*}
\left( \frac{E}{R}\# F^s\right)_t (z)= & \frac{1}{R}\cdot E(z) + F_t^s\circ (\phi_t^E)^{-1}(z)\\
= & \frac{\pi}{R}\cdot \norm{z}^2 + F_t^s\left(e^{\frac{2\pi t}{R}} z\right)\\
> & \left( \frac{1}{R} - \mu \right)\pi\norm{z}^2
\end{align*}
implies that $(\frac{1}{R}E)\# F^s$ generates a positive path of contactomorphisms for each $s$ if $\frac{1}{R}>\mu$. Therefore, the sets
\[U\left( \left\{ e^{-\frac{2\pi t}{R}}\cdot \varphi^0_t \right\} \right)= B\left(R\right)\times\mathbb{S}^1\]
and
\[U\left( \left\{ e^{-\frac{2\pi t}{R}}\cdot \varphi^1_t \right\} \right) \subset \operatorname{int} B\left(R\right)\times\mathbb{S}^1\]
are contact isotopic. This proves the proposition up to the following detail: the Hamiltonian of the contact isotopy $e^{\frac{2\pi i t}{R}}\cdot \varphi_t$ might not be 1-periodic and, therefore, the set $U\left( \left\{ e^{-\frac{2\pi t}{R}}\cdot \varphi^1_t \right\} \right) $ might not be well defined, strictly speaking. The original proof \cite[Section~2.1]{eliashberg2006geometry} addresses this issue.
\end{proof}

\section{Orderability in contact geometry}\label{sec:ord}

The considerations of the previous section can be generalized: a contractible positive loop of contactomorphisms on the boundary $\partial W$ of a Liouville domain $W$ furnishes a squeezing of a subset $\Omega\times\mathbb{S}^1\subset W\times\mathbb{S}^1$ inside itself. The existence of a contractible positive loop, or even just a positive loop, is a serious condition that is often not met. The importance of positive loops in contact geometry was first recognised by Eliashberg and Polterovich in \cite{eliashberg2000partially}, where they introduced the notion of orderability (of a contact manifold). 

\begin{defn}\label{def:orderability}
A cooriented contact manifold $M$ is called \emph{orderable} if there are no contractible positive loops $\varphi_t:M\to M$ of contactomorphisms.
\end{defn}

Denote by $\op{Cont}(M)$ the group of contactomorphisms of $M$, and by $\widetilde{\op{Cont}}(M)$ the universal cover of $\op{Cont}(M)$. If a contact manifold $M$ is orderable, then one can define a partial order $\prec$ on $\widetilde{\op{Cont}}(M)$ in the following way: for $A, B\in\widetilde{\op{Cont}}(M)$, the relation $A\prec B$ holds if, and only if, there exists a non-negative path $\varphi_t:M\to M$ of contactomorphisms such that $A\ast[\varphi]=B$, where $\ast$ stands for the concatenation.  While $\prec$ is always reflexive and transitive, it is antisymmetric if, and only if, $M$ is orderable \cite[Propositions~2.1.A and 2.1.B]{eliashberg2000partially}. Therefore, the orderability from Definition~\ref{def:orderability} is actually about orderability of the universal cover  $\widetilde{\op{Cont}}(M)$. In a similar fashion, one can define a partial order on $\op{Cont}(M)$ if there are no positive (contractible or not) loops of contactomorphisms on $M$. We say that a contact manifold $M$ is \emph{strongly orderable}\footnote{There is no consensus on the meaning of 'strongly orderable' in the literature.  In this article, the phrase has the same meaning as in \cite{casals2016strong}. On the other hand, 'strong orderability' from \cite{liu2020positive} refers to a different quality of contact manifolds.} if it does not admit positive loops of contactomorphisms. Obviously, strong orderability implies orderability.  The significance of orderability in contact geometry goes beyond contact non-squeezing. For instance, the following theorem by Albers, Fuchs, and Merry \cite{albers2015orderability} relates orderability to the famous Weinstein conjecture.

\begin{theorem}[Albers-Fuchs-Merry]
	The Weinstein conjecture holds for any contact manifold that is not strongly orderable. If a contact manifold is not orderable, then every contact form on it has a contractible closed Reeb orbit.
\end{theorem}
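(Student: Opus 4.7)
The plan is to prove both statements by contrapositive. Suppose $\alpha$ is a contact form on $M$ whose Reeb flow admits no closed orbit (resp. no contractible closed orbit); I will show that $M$ admits no positive loop (resp. no contractible positive loop) of contactomorphisms. Fix a hypothetical positive loop $\{\varphi_t\}_{t\in[0,1]}$ at the identity, generated with respect to $\alpha$ by a contact Hamiltonian $h_t:M\to\R^+$ satisfying $h_t\geqslant c>0$ by compactness of $M$. The key is to set up a Floer-theoretic invariant that is simultaneously sensitive to positive loops and to closed Reeb orbits.

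When $M$ is the boundary of a Liouville filling $W$, the natural candidate is Rabinowitz Floer homology, or the positive part of symplectic homology of $W$, whose chain complex is generated by closed Reeb orbits of $\alpha$. In the general case, one works directly on the symplectization $SM\approx M\times\R^+$ with a version of Hamiltonian Floer homology; the $\R^+$-equivariant lift $H_t(p,r):=r\cdot h_t(p)$ of $h_t$ generates a Hamiltonian flow on $SM$ covering $\varphi_t$, and because $h_t\geqslant c$, the asymptotic slope of $H_t$ is bounded below by a positive constant.

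The positive loop $\varphi_t$ then induces a Seidel-type self-map of this Floer homology. Positivity of $h_t$ with lower bound $c$ forces the map to shift the natural action filtration by a definite positive amount, producing a nontrivial class whose spectral value, via a min-max argument, must be realised by an honest closed Reeb orbit of $\alpha$. If $\varphi_t$ is contractible in $\op{Cont}(M)$, the whole construction takes place inside the contractible component of the free loop space of $SM$, so the resulting Reeb orbit is itself contractible. Either conclusion contradicts the initial assumption and establishes the contrapositive.

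The main obstacle, and the substance of \cite{albers2015orderability}, is the Floer-theoretic step. One needs a formalism that works without assuming a filling, uniform compactness estimates for Floer cylinders with the linear-at-infinity Hamiltonian $H_t$ despite the noncompactness of $SM$, and a spectral invariant formalism in which the critical action is genuinely attained by a periodic Reeb orbit rather than lost to noncompact limits. Albers, Fuchs, and Merry bypass the most delicate points by using Rabinowitz Floer homology together with a leafwise intersection argument, where positivity of the contact Hamiltonian translates directly into the existence of a leafwise intersection point, i.e.\ a closed Reeb orbit; this leafwise intersection theorem is the technical heart of the proof.
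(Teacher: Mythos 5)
The survey states this result as a citation from \cite{albers2015orderability} and contains no proof of it, so there is no in-paper argument to compare your attempt against; I can only assess the outline on its own terms. As such, it is a sensible roadmap: the contrapositive reduction is correct, and running a Floer-type theory on the symplectization with the $\R^+$-equivariant lift $H_t(p,r)=r\cdot h_t(p)$, using the uniform positivity $h_t\geqslant c$ to shift an action filtration, is the right circle of ideas. But it is a roadmap and not a proof. You explicitly defer ``the main obstacle'' to the cited paper, and that obstacle is the entire mathematical content of the statement, which after all implies the Weinstein conjecture for a class of contact manifolds; a proof that outsources exactly the hard analysis has a genuine gap by construction.

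Three specific points would fail or are missing as written. First, the machinery you invoke by name --- Rabinowitz Floer homology and the leafwise-intersection theory of Albers--Frauenfelder --- is set up for restricted contact type hypersurfaces bounding a compact region in an exact convex symplectic manifold, i.e.\ it presupposes a Liouville filling. The theorem concerns arbitrary closed contact manifolds, and the whole difficulty of \cite{albers2015orderability} is to make a Floer-theoretic argument work without a filling; the compactness of Floer cylinders near the negative (small-$r$) end of $M\times\R^+$ is precisely where the positivity of the loop has to be used analytically, and your sketch does not supply this. Second, a leafwise intersection point yields a Reeb \emph{chord}, not a closed orbit; one obtains a closed orbit only because the time-one map of a loop is the identity, and only if the associated time shift is nonzero. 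Your min-max paragraph never explains why the detected spectral value is nonzero --- i.e.\ why the critical point found is not a constant solution with zero period --- nor which nonzero class the Seidel-type map is pushing when, under the hypothesis for contradiction, there are no closed Reeb orbits to feed the complex beyond the constant critical manifold. Third, the final claim that contractibility of the loop forces the orbit to be contractible needs an actual mechanism (for instance, a contracting homotopy of the loop used to cap the relevant cylinder), not just the remark that the construction ``takes place inside the contractible component'' of the free loop space.
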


Another example is work of Colin and Sandon \cite[Proposition~3.2]{colin2015discriminant} in which they show that the orderability of a contact manifold $M$ is equivalent to the non-degeneracy of the oscillation pseudo-norm on $\widetilde{\op{Cont}}_0(M)$. Along these lines, recent work by Allais and Arlove \cite{allais2023spectral} reinterprets orderability in terms of existence of spectral selectors. The role of positive loops of contactomorphisms in contact geometry is further emphasised by work of Hern\'{a}ndez-Corbato and Mart\'{i}nez-Aguinaga \cite{hernandez2024topology} in which they relate homotopy groups of the loop space of $\op{Cont}(M)$ and the homotopy groups of the space of positive loops of contactomorphisms on $M$. Partially inspired by problems arising in constructions of contact structures in \cite{borman2015existence}, Cieliebak, Eliashberg, and Polterovich introduced and studied the concept of orderability up to conjugation \cite{cieliebak2017contact}. This concept is further explored in the PhD thesis by De Groote \cite{de2019orderability}.

The question of orderability would have been quickly resolved if there existed non-trivial non-negative loops of compactly supported contactomorphisms on the standard contact $\R^{2n+1}$. Namely, using such a loop and a contact Darboux chart one would be able to prove that every contact manifold is at least not strongly orderable. However, there are no such loops for $\R^{2n+1}$ \cite{bhupal2001partial}. This claim is reasserted by (strongly) orderable examples that follow (see Example~\ref{ex:sorbcrit} and Theorem~\ref{thm:weigel}). Similarly, all non-negative loops of compactly supported contactomorphisms of $\R^{2n}\times\mathbb{S}^1$ are trivial, i.e. they are constant loops. In other words, the standard contact $\R^{2n+1}$ and $\R^{2n}\times\mathbb{S}^1$ are strongly orderable, where the orderability here should be understood in the compactly supported category. Now, we list examples of orderable and non-orderable contact manifolds.

\begin{ex}
Lemma~\ref{lem:EposloopS} claims that the standard $\mathbb{S}^{2n+1}$ is not orderable for $n\geqslant 1$. On the other hand, Theorem~\ref{thm:n=1} and Section~\ref{sec:roleloops} imply that $\mathbb{S}^1$ is orderable\footnote{The orderability of $\mathbb{S}^1$ can also be directly verified.}. None of the standard spheres is strongly orderable because the Reeb flow for the standard contact form gives rise to a positive loop of contactomorphisms. This particular loop, however, is never contractible \cite{casals2016chern}. 
\end{ex}

The non-orderability of standard spheres (except, perhaps, of $\mathbb{S}^3$) is a special case of the following result: for every Liouville manifold $W$ (of finite type) and $k\geqslant 2$, the ideal boundary of $W\times\mathbb{C}^k$ is not orderable \cite[Section~3]{eliashberg2006geometry}. The case where $k=1$ is not completely understood.

\begin{qu}
	Does there exist a Liouville manifold $W$ such that $W\times\mathbb{C}$ has orderable ideal boundary?
\end{qu}

As opposed to the higher dimensional spheres, the projective spaces ${\R}P^{2n+1}$ are orderable \cite{givental1990nonlinear,givental1991nonlinear}. So are, by the PhD thesis of Milin \cite{milin2008orderability}, the standard contact lens spaces associated to prime numbers (see also \cite{sandon2011equivariant,granja2021givental}).

\begin{ex}\label{ex:lens}
Let $p$ be a prime number and let $L^{2n-1}_p:= \mathbb{S}^{2n-1}/\mathbb{Z}_p$ be the standard contact lens space obtained by quotienting the standard contact sphere by rotations of the Hopf fibres. Then, $L_p^{2n-1}$ is not orderable.
\end{ex}

By taking $p=2$, Example~\ref{ex:lens} recovers the orderability of the projective spaces. Another source of orderable contact manifolds are unit cotangent bundles of closed smooth manifolds.

\begin{ex}\label{ex:cotangent}
For every closed manifold $M$, its unit cotangent bundle $S^\ast M$ is orderable \cite{eliashberg2006geometry,chernov2010non}.
\end{ex}

In fact, every contact manifold that is fillable by a Liouville domain with non-zero symplectic homology is orderable. This was first shown by Chantraine, Colin, and Rizell in \cite{chantraine2019positive}, strengthening a previous result by Albers and Merry \cite{albers2018orderability}. The result can be equivalently phrased in terms of Rabinowitz Floer homology: if Rabinowitz Floer homology of a contact manifold inside its Liouville filling is non-vanishing, then the contact manifold is orderable. By work of Cieliebak, Frauenfelder, Oancea \cite{cieliebak2010rabinowitz}, and Ritter \cite[Theorem~13.3]{ritter2013topological}, non-vanishing of Rabinowitz Floer homology is equivalent to non-vanishing of symplectic homology.

\begin{ex}\label{ex:SH}
If a Liouville domain $W$ satisfies $SH(W)\not=0$, then the contact manifold $\partial W$ is orderable.
\end{ex} 

Viterbo's isomorphism between symplectic homology $SH(T^\ast M; \mathbb{Z}_2)$ and loop space homology $H(\Lambda M; \mathbb{Z}_2)$ \cite{viterbo2003functors}, see also \cite{abbondandolo2006floer,salamon2006floer,abouzaid2015symplectic}, presents Example~\ref{ex:cotangent} as a special case of Example~\ref{ex:SH}. Example~\ref{ex:SH} (and Example~\ref{ex:cotangent} as well) also implies orderability of ${\R}P^{3}$ because ${\R}P^{3}$ is contactomorphic to the unit cotangent bundle $S^\ast\mathbb{S}^2$ of the sphere. The other projective spaces of dimension greater than 1 are not even Liouville fillable \cite{ghiggini2022symplectic,zhou2021projective,zhou2024fillings} and, therefore, are instances of orderability phenomena beyond the scope of Examples~\ref{ex:SH} and \ref{ex:cotangent}.
Another special case of Example~\ref{ex:SH} is the orderability of the link of a polynomial $p\in\mathbb{C}[z_0,\ldots, z_n]$ with an isolated singularity and with the positive Milnor number \cite{kwon2016brieskorn}. In particular, many Brieskorn manifolds are orderable although never strongly orderable. The following question inquires about the opposite of Example~\ref{ex:SH}.

\begin{qu}
	Does there exist a Liouville domain with vanishing symplectic homology and with orderable boundary?
\end{qu}

By work of Albers and Kang \cite[Theorem~1.1(e)]{albers2023rabinowitz} and of Bae, Kang , and Kim \cite[Corollary~1.10]{bae2024rabinowitz}, it is known that many prequantization bundles are orderable. Since the prequantization bundles have periodic Reeb flows, they are never strongly orderable. The following example, due to \cite[Example~9.2]{chernov2010non} and \cite{albers2012variational}, is a criterion for strong orderability of unit cotangent bundles. 

\begin{ex}\label{ex:sorbcrit}
Let $M$ be a closed smooth manifold. Assume that one of the following conditions is satisfied:
\begin{enumerate}
	\item the fundamental group $\pi_1(M)$ is infinite,
	\item the fundamental group $\pi_1(M)$ is finite and the cohomology ring $H^\ast(M; \mathbb{Q})$ has at least two generators.
\end{enumerate}
Then, the unit cotangent bundle $S^\ast M$ is strongly orderable. In other words, $M$ has no positive loops of contactomorphisms.
\end{ex}

Along these lines, Weigel \cite{weigel2015orderable} showed that the boundary of a Liouville domain is strongly orderable if its filtered Rabinowitz Floer homology has superlinear growth. As an application, he obtained the following beautiful result.

\begin{theorem}[Weigel]\label{thm:weigel}
	Let $(M, \xi)$ be a Liouville fillable contact manifold of dimension at least 7. Then, it is possible to obtain a strongly orderable contact structure on $M$ by modifying $\xi$ in a contact Darboux chart.
\end{theorem}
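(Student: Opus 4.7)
The plan is to invoke the criterion of Weigel mentioned just above the theorem: strong orderability of $\partial W$ follows whenever the filtered Rabinowitz Floer homology of a Liouville filling $W$ has superlinear growth in the action. Consequently, it suffices to produce a Liouville filling of a contact structure $\xi'$ on $M$ that coincides with $\xi$ outside a prescribed Darboux chart and whose filtered Rabinowitz Floer homology grows superlinearly in the action.

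Let $W_0$ be a Liouville filling of $(M,\xi)$ and pick a Darboux chart $U\subset M$. I would form a boundary connect sum $W' := W_0\,\natural\, P$, where $P$ is an auxiliary Liouville domain of the same dimension whose filtered Rabinowitz Floer homology has superlinear growth. The boundary connect sum is performed along small Darboux balls, one inside $U\subset\partial W_0$ and one in $\partial P$; after the operation, $\partial W'$ is diffeomorphic to $M$, and the induced contact structure differs from $\xi$ only inside $U$, as required.

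Constructing $P$ is the heart of the matter. A natural candidate in dimension $\dim M \geq 7$ is furnished by a Brieskorn-type Liouville filling: by the results referenced in Example~\ref{ex:SH} and by \cite{kwon2016brieskorn}, suitable choices of Brieskorn exponents produce Liouville fillings whose filtered symplectic (equivalently, Rabinowitz Floer) homology exhibits polynomial growth of arbitrarily large degree; in the dimensions allowed by the hypothesis this growth can be arranged to be strictly superlinear. One then verifies that the superlinear growth of $\op{RFH}^{<a}(P)$ transfers to $\op{RFH}^{<a}(W')$ via a Mayer--Vietoris/neck-stretching argument for boundary connect sums, which identifies the $P$-orbits with distinct generators of $\op{RFH}^{<a}(W')$. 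Weigel's criterion then delivers strong orderability of $\partial W'$.

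The principal obstacle, and the reason for the hypothesis $\dim M \geq 7$, is the construction of $P$: one needs both a filling with sufficiently rich Reeb dynamics and enough flexibility to localize the boundary modification to a Darboux chart; both the Brieskorn construction and the $h$-principles that keep the modification local require enough dimension to maneuver. Checking that superlinear growth survives the boundary connect sum is technically delicate but conceptually clear, as it is governed by the standard behaviour of Floer-type invariants under such gluings.
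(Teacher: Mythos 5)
First, a caveat about the comparison: the survey does not actually prove Theorem~\ref{thm:weigel}; it only records Weigel's criterion (superlinear growth of the filtered Rabinowitz Floer homology of a Liouville filling implies strong orderability of its boundary) and cites \cite{weigel2015orderable}. Your overall architecture --- reduce to that criterion, construct an auxiliary Liouville domain $P$ with superlinear growth, and boundary-connect-sum it onto a filling $W_0$ of $(M,\xi)$ so that the contact structure changes only inside a Darboux ball --- is indeed the architecture of Weigel's actual argument, and the transfer of growth under boundary connected sum (a subcritical handle attachment) is a known type of statement that he does establish in filtered form.

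The genuine gap is in the construction of $P$, which you correctly call ``the heart of the matter'' but then dispose of with an assertion that is not right as stated. Two constraints must hold simultaneously. First, for $\partial(W_0\natural P)\cong M\#\,\partial P$ to be diffeomorphic to $M$, and for the modification of $\xi$ to be confined to a single Darboux chart, $\partial P$ must be the \emph{standard smooth sphere}; this is a nontrivial restriction on which Brieskorn-type domains are admissible (exotic spheres must be excluded), and you never impose it. Second, $P$ must have superlinear filtered $\op{RFH}$ growth. But Brieskorn links with their natural contact forms have periodic Reeb flow, so the closed orbits of action at most $T$ form $O(T)$ Morse--Bott families each contributing boundedly to homology, and $\dim\op{RFH}^{<T}$ grows only \emph{linearly} in $T$; the sources you invoke (Example~\ref{ex:SH} and \cite{kwon2016brieskorn}) give non-vanishing of the unfiltered invariant, not growth rates, so they cannot supply ``polynomial growth of arbitrarily large degree.'' Manufacturing a domain satisfying both constraints at once --- which Weigel achieves by a carefully designed handle-attachment and perturbation scheme, and which is exactly where the hypothesis $\dim M\geqslant 7$ is used --- is the main technical content of \cite{weigel2015orderable}, and your proposal leaves it unproved.
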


\section{Orderability and quasimorphisms}

Given a group $G$, a quasimorphism on $G$ is a function $\mu:G\to\R$ such that
\[\abs{\mu(ab)-\mu(a)-\mu(b)}\leqslant C\]
for all $a, b\in G$ and for some $C\in\R^+$, independent of $a$ and $b$. Quasimorphisms can be thought of as homomorphisms up to a bounded error.
Quasimorphisms are a useful tool for studying groups of transformations \cite{entov2004commutator,fukaya2019spectral,gambaudo2004commutators,ghys2001groups,ghys2007knots,polterovich2006floer,shelukhin2014action,usher2011deformed}, particularly when these groups happen to be perfect\footnote{Recall that perfect groups admit no non-zero homomorphisms.}. 
 A quasimorphism is called \emph{homogeneous} if $\mu(a^k)= k\mu(a)$ for all $a\in G$ and all $k\in\mathbb{Z}$. Denote by $\op{Cont}_0(M)$ the identity component of the group of contactomorphisms (of a contact manifold $M$). A quasimorphism $\mu:\widetilde{\op{Cont}}_0(M)\to R$ is said to be \emph{monotone} if $A\prec B$ implies $\mu(A)\leqslant \mu(B)$ for all $A, B\in \widetilde{\op{Cont}}_0(M)$. As in Section~\ref{sec:ord}, $\prec$ denotes the preorder relation furnished by non-negative paths of contactomorphisms. The following theorem from \cite{eliashberg2000partially} establishes a link between orderability and the existence of monotone quasimorphisms on the universal cover of the group of contactomorphisms. In \cite[Section~1.3.E]{eliashberg2000partially}, the theorem was proven in the special case of Givental's quasimorphism on $\R P^{2n+1}$. The same proof, however,  works in general, see \cite[Theorem~1.28]{borman2015quasimorphisms}. 

\begin{theorem}[Eliashberg-Polterovich]\label{thm:quasi}
A contact manifold $M$ is orderable if there exists a monotone non-zero homogeneous quasimorphism on $\widetilde{\op{Cont}}_0(M).$
\end{theorem}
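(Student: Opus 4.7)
The plan is to argue by contrapositive. Suppose $M$ is not orderable, so there exists a contractible positive loop $\varphi_t:M\to M$ based at the identity, generated by a positive contact Hamiltonian $f_t$; I would then deduce that any monotone homogeneous quasimorphism $\mu$ on $\widetilde{\op{Cont}}_0(M)$ must vanish identically, contradicting the non-triviality hypothesis. Because $\varphi$ is contractible, it represents the identity element in $\widetilde{\op{Cont}}_0(M)$, and so does its $k$-fold concatenation $\varphi^{(k)}$ for every positive integer $k$.

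The central claim would be that $\mathrm{id}\prec A$ for every $A\in\widetilde{\op{Cont}}_0(M)$. Given a representative path $\alpha_t$ of $A$, generated by a contact Hamiltonian $G_t$, I would form the pointwise composition
\[
\eta_t := \varphi^{(k)}_t\circ\alpha_t.
\]
A standard Eckmann--Hilton-type homotopy rel endpoints shows that $\eta$ represents the same class in $\widetilde{\op{Cont}}_0(M)$ as the concatenation $\varphi^{(k)}\ast\alpha$, and the latter represents $[\varphi^{(k)}]\cdot A = A$. The generating contact Hamiltonian of $\eta$ has the schematic form
\[
F^{(k)}_t + \lambda_t\cdot\bigl(G_t\circ(\varphi^{(k)}_t)^{-1}\bigr),
\]
where $F^{(k)}$ is the Hamiltonian of $\varphi^{(k)}$ and $\lambda_t>0$ is the conformal factor of $\varphi^{(k)}_t$. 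Since $\varphi^{(k)}_t=\varphi_{\{kt\}}$ is traversed $k$ times faster, its Hamiltonian satisfies $F^{(k)}_t\geqslant k\cdot\min f$, while $\lambda_t$ and $G_t$ remain uniformly bounded on the compact set of interest. Choosing $k$ large enough makes the whole expression strictly positive, so $\eta$ witnesses $\mathrm{id}\prec A$.

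Monotonicity of $\mu$, together with the normalisation $\mu(\mathrm{id})=0$ forced by homogeneity, yields $\mu(A)\geqslant 0$ for every $A$. Applying the same argument to $A^{-1}$ gives $\mu(A^{-1})\geqslant 0$, and homogeneity produces $\mu(A^{-1})=-\mu(A)$; hence $\mu(A)=0$ for every $A\in\widetilde{\op{Cont}}_0(M)$, contradicting non-triviality.

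The main obstacle will be the Hamiltonian estimate in the middle step: one needs to control $|G_t|$ and the conformal factor of $\varphi^{(k)}_t$ uniformly so that the large positive contribution of $F^{(k)}$ really dominates everywhere. This is handled by working in the compactly supported setting (or invoking compactness of $M$) and by exploiting that $\varphi^{(k)}_t$ traverses the fixed one-parameter family $\{\varphi_s\}_{s\in[0,1]}$, whose conformal factor is automatically bounded. Once this estimate is in place, the remainder is a clean formal combination of monotonicity, homogeneity, and the triviality of $[\varphi^{(k)}]$ in the universal cover.
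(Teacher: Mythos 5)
Your argument is correct and is essentially the standard Eliashberg--Polterovich proof that the survey points to (the paper itself only cites \cite{eliashberg2000partially} and \cite{borman2015quasimorphisms} rather than reproducing the argument): a contractible positive loop, iterated enough times, dominates the Hamiltonian of any given path, so $\op{id}\prec A$ for all $A\in\widetilde{\op{Cont}}_0(M)$, and monotonicity plus homogeneity then force $\mu\equiv 0$. The only points to keep explicit are that $\min f>0$ requires compactness of $M$ (or an appropriate compactly supported convention) and that the conformal factors of $\varphi^{(k)}_t$ are those of the fixed loop, hence bounded independently of $k$ --- both of which you flag.
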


In the view of Theorem~\ref{thm:quasi}, quasimorphisms constructed in \cite{borman2015quasimorphisms} and \cite{zapolsky2020quasi}, see Corollary~1.29 and Theorem~1.3 in \cite{borman2015quasimorphisms} and Corollary~1.3 in \cite{zapolsky2020quasi}, provide further examples of orderable contact manifolds.

\section{Orderability of overtwisted contact manifolds} 

The relation between overtwistedness and orderability is not well understood. Not a single example of an overtwisted contact manifold is known for which the orderability question is resolved. In other words, the following two questions are completely open.

\begin{qu}\label{qu:ordover}
Does there exist an orderable overtwisted contact manifold?
\end{qu}

\begin{qu}\label{qu:nonordover}
Does there exist a non-orderable overtwisted contact manifold?
\end{qu}

The first step towards the answer to these questions was made by Casals, Presas, and Sandon in \cite{casals2016small}, where they showed that if there exists a positive loop of contactomorphisms on an overtwisted 3-dimensional contact manifold then its contact Hamiltonian cannot be arbitrarily small in the $C^0$, and also in the $L^1$, sense. This result has been extended to higher-dimensional overtwisted contact manifolds in \cite{hernandez2020tight} by Hern\'andez-Corbato, Mart\'{i}n-Merch\'{a}n, and Presas. In \cite{casals2016strong}, Casals and Presas constructed positive loops of contactomorphisms on certain overtwisted contact manifolds, including overtwisted $\mathbb{S}^{3}$ and $\mathbb{S}^1\times\mathbb{S}^2$. Therefore, \cite{casals2016strong} answers affirmatively the analogue of Question~\ref{qu:nonordover} for strong orderability. In fact, by work of Liu \cite{liu2016thesis,liu2020positive}, overtwisted contact manifolds are never strongly orderable.

Another partial answer to Questions~\ref{qu:ordover} and \ref{qu:nonordover} is given by Borman, Eliashberg, and Murphy in \cite{borman2015existence}. Instead of considering the preorder $\prec$, they consider a weaker relation $\lessapprox$, whose precise definition will be given later in this section. `Weaker relation' means that $A\prec B$ implies $A \lessapprox B$. In particular, if $\prec$ is not asymmetric then neither is $\lessapprox$. Therefore, non-orderability (with respect to $\prec$) implies non-orderability with respect to $\lessapprox$. The next theorem from \cite{borman2015existence} answers the analogues of Questions~\ref{qu:ordover} and \ref{qu:nonordover} for the preorder relation $\lessapprox$.

\begin{theorem}[Borman-Eliashberg-Murphy]
Every closed overtwisted contact manifold is not $\lessapprox$-orderable.
\end{theorem}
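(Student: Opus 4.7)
The plan is to invoke the Borman-Eliashberg-Murphy h-principle for contactomorphisms of overtwisted contact manifolds developed in \cite{borman2015existence}. The relation $\lessapprox$ weakens $\prec$ by allowing paths whose generating contact Hamiltonian is only approximately non-negative, i.e.\ bounded below by $-\varepsilon$ for any prescribed $\varepsilon > 0$. Consequently, to prove that $M$ is not $\lessapprox$-orderable it suffices to exhibit a \emph{contractible} loop of contactomorphisms $\varphi_t:M\to M$ whose generating contact Hamiltonian $h_t$ satisfies $\inf_{t,M} h_t \geqslant -\varepsilon$ for arbitrarily small $\varepsilon > 0$ - a contractible $\lessapprox$-positive loop.

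Such a loop is built in two stages. First, I would produce a formally positive contractible loop $\{\varphi_t^{f}\}$ of \emph{formal} contactomorphisms of $M$: working in a Darboux chart and extending by the identity outside, one readily obtains a formal loop whose formal contact Hamiltonian is bounded below by a positive constant $c > 0$ at every time $t$, together with a null-homotopy through formal loops. Nothing here requires overtwistedness, as the space of formal contact Hamiltonians is convex. Second, I would invoke the parametric BEM h-principle: on a closed overtwisted contact manifold, the inclusion of genuine contact isotopies into formal ones is a weak homotopy equivalence, and the construction can be carried out with $C^0$-control relative to fixed initial data. Applying this to the formal loop and its null-homotopy simultaneously yields a genuine contractible loop $\varphi_t$ of contactomorphisms whose contact Hamiltonian is within $\varepsilon$ of the formal Hamiltonian, in particular $\geqslant c - \varepsilon$, hence $\lessapprox$-positive.

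With such a contractible $\lessapprox$-positive loop $\varphi_t$ at hand, antisymmetry of $\lessapprox$ fails by the standard domination argument: for any $A, B \in \widetilde{\op{Cont}}_0(M)$ joined by a path with generating Hamiltonian $g_t$, splicing in $k$ time-rescaled copies of $\varphi_t$ leaves the endpoints unchanged in $\widetilde{\op{Cont}}_0(M)$ because the inserted loop represents the identity, while for $k$ sufficiently large the combined Hamiltonian is pointwise $\geqslant -\varepsilon$. This forces $A \lessapprox B$ and, symmetrically, $B \lessapprox A$ for all $A, B$, contradicting antisymmetry unless $\widetilde{\op{Cont}}_0(M)$ is trivial. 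The main obstacle is the quantitative, relative, parametric version of the BEM h-principle needed in the second stage: one must track the $C^0$-approximation of the formal contact Hamiltonian by the genuine one simultaneously across the loop and its null-homotopy, which requires following the estimates through the wrinkling/stabilisation steps of the BEM construction rather than relying on their purely homotopical conclusion.
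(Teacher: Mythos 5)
Your proposal misstates the relation $\lessapprox$. As defined in the paper (following \cite{borman2015existence}), $A\lessapprox B$ means there is an ambient contact isotopy $\Phi_t$ of $M\times T^\ast\mathbb{S}^1$ with $\Phi_1\bigl(V^+(\varphi^h)\bigr)\subset V^+(\varphi^f)$ for suitable representatives; it is \emph{not} the relation generated by paths in $\op{Cont}(M)$ whose Hamiltonian is bounded below by $-\varepsilon$ for every $\varepsilon>0$. You give no argument connecting your $\varepsilon$-relation to the actual one, so the "it suffices to exhibit\ldots" reduction at the start is unsupported. This matters because the entire point of passing from $\prec$ to $\lessapprox$ is to move the problem from the group $\op{Cont}(M)$ to domains in $M\times T^\ast\mathbb{S}^1$, where the Borman--Eliashberg--Murphy h-principle can actually be applied: overtwistedness of $M$ makes the relevant complementary regions overtwisted, and the classification of overtwisted contact structures is used to build the squeezing isotopy $\Phi_t$ between any two domains $V^+(\varphi^h)$ and $V^+(\varphi^f)$ directly. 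No positive loop of contactomorphisms of $M$ is ever produced, which is precisely why the theorem only concludes non-$\lessapprox$-orderability.

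The second, fatal gap is the h-principle you invoke in stage two. The BEM h-principle is a statement about the space of contact \emph{structures} on an overtwisted manifold; there is no theorem asserting that the inclusion of genuine contact isotopies into formal ones is a weak homotopy equivalence, let alone with $C^0$-control on the generating Hamiltonian. Note that your stages one and two, if they worked, would yield a genuine contractible loop with Hamiltonian $\geqslant c-\varepsilon>0$, i.e.\ a contractible \emph{positive} loop of contactomorphisms on every closed overtwisted manifold. That would prove every such manifold is non-orderable in the ordinary sense of $\prec$, answering Question~\ref{qu:nonordover} affirmatively in full generality --- a question the survey states is completely open (and whose difficulty is underscored by the Casals--Presas--Sandon and Hern\'andez-Corbato--Mart\'{i}n-Merch\'{a}n--Presas constraints on Hamiltonians of positive loops on overtwisted manifolds). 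An argument that proves an open problem as a lemma is not tracking estimates through a known construction; it is assuming the missing theorem.
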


It is actually not known whether non-orderability of $\lessapprox$ implies non-orderability \cite[page 358]{borman2015existence}. Now, we define the preorder relation $\lessapprox$ on $\widetilde{\op{Cont}}(M)$. Let $\varphi^h$ be the contact isotopy furnished by a 1-periodic contact Hamiltonian $h_t: M\to\R$. Denote by $V^+(\varphi^h)$ the subset of $M\times T^\ast \mathbb{S}^1$ given by
\[ V^+(\varphi^h):=\left\{ (x,v,t)\::\: v+ h_t(x)\geqslant 0 \right\}\subset M\times T^\ast\mathbb{S}^1. \]
Here, we tacitly identified $T^\ast\mathbb{S}^1$ with $\R\times\mathbb{S}^1$ and denoted by $v$ and $t$ the coordinates of $\R\times\mathbb{S}^1$. Elements $A, B\in \widetilde{\op{Cont}}(M)$ are said to satisfy the relation $A\lessapprox B$ if there exist representatives $\varphi^h$ and $\varphi^f$ respectively of $A$ and $B$ and a contact isotopy 
$$\Phi_t: M\times T^\ast\mathbb{S}^1\to M\times T^\ast\mathbb{S}^1$$
such that $\Phi_0=\op{id}$ and such that $\Phi_1(V^+(\varphi^h))\subset V^+(\varphi^f)$. In \cite{eliashberg2000partially}, it was proved that indeed $A \prec B$ implies $A \lessapprox B$.

\section{Positive isotopies of Legendrians}

Closely related to orderability is the question of existence of positive loops of Legendrians.

\begin{defn}\label{def:leg}
	Let $M$ be a cooriented contact manifold. An isotopy $\{L_t\}_{t\in[0,1]}$ of Legendrian submanifolds of $M$ is called positive if there exists a parametrization
	\[\iota : L\times[0,1]\to M\]
	such that $\iota(L\times\{t\})= L_t$ and such that the vector $\partial_t\iota(x,t)$ points in the positive direction, that is $\alpha(\partial_t\iota(x,t))>0$ for some contact form $\alpha$ on $M$ and for all $x$ and $t$.
\end{defn}

If $L_0=L_1$ in Definition~\ref{def:leg}, then we say that $\{L_t\}$ is a positive loop of Legendrians. The notions of non-negative Legendrian isotopies and of non-negative loops of Legendrians are defined analogously (by replacing the condition $\alpha(\partial_t\iota(x,t))>0$ with $\alpha(\partial_t\iota(x,t))\geqslant 0$). If $\Lambda$ denotes the space of Legendrians in $M$ that are isotopic to a given Legendrian $L\subset M$, then non-negative Legendrian isotopies give rise to a preorder relation on $\Lambda$. Namely, $L_0\prec L_1$  for $L_0, L_1\in\Lambda$ if, and only if, there exists a non-negative Legendrian isotopy $\{L_t\},  t\in[0,1]$ from $L_0$ to $L_1$. The relation $\prec$ on $\Lambda$ is a partial order, if, and only if, there are no positive loops of Legendrians in $\Lambda$ \cite[Proposition~4.5]{chernov2016universal}. If there are no contractible positive loops of Legendrians in $\Lambda$, then $\prec$ induces a partial order on the universal cover $\tilde{\Lambda}$ of $\Lambda$ \cite[Proposition~4.5]{chernov2016universal}. Obviously, for a non-orderable contact manifold $M$, the induced relation on $\tilde{\Lambda}$ is never a partial order because a contractible positive loop $\varphi_t: M\to M$ of contactomorphisms furnishes a contractible positive loop $\{\varphi_t(L)\}$ of Legendrians in $\Lambda$. Similarly, $\prec$ is not a partial order on $\Lambda$ for any $L$,  if $M$ is not strongly orderable. 

Now, we list results regarding existence and non-existence of positive Legendrian paths. By work of Colin, Ferrand, and Pushkar, the 1-jet spaces of closed manifolds do not admit positive loops of Legendrians containing the 1-jet extensions of functions \cite{colin2017positive}.

\begin{theorem}[Colin-Ferrand-Pushkar]
	Let $N$ be a smooth closed manifold. Then, there does not exist a positive Legendrian loop in $J^1(N)$ based at the 1-jet extension of a function.
\end{theorem}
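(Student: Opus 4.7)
The plan is to use the theory of generating functions quadratic at infinity (GFQI) in $J^1(N)$, developed by Chaperon, Laudenbach--Sikorav, and Chekanov. The 1-jet extension $j^1 f$ is Legendrian isotopic to the zero section $j^1 0$ via $s\mapsto j^1(sf)$, so every Legendrian appearing along a loop based at $j^1 f$ admits a GFQI. I would begin by choosing a continuous family $F_t:N\times\R^k\to\R$ of GFQIs generating the positive loop $\{L_t\}_{t\in[0,1]}$, after stabilisation if necessary so that $F_0$ and $F_1$ can be taken equal (they generate the same Legendrian $j^1 f$, and Chekanov's uniqueness theorem allows us to match them up to fibre-preserving diffeomorphism and stabilisation).

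To each non-zero class $\alpha\in H^\ast(N)$ I would associate the Viterbo--Chekanov spectral number
\[c(\alpha, F_t):=\inf\left\{c\in\R\::\:\alpha\in\op{im}\left(H^\ast(F_t^c, F_t^{-\infty})\to H^\ast(F_t^\infty, F_t^{-\infty})\right)\right\},\]
where $F_t^c$ denotes the sublevel set. By Viterbo's invariance theorem this number depends only on $L_t$, so it may be written $c(\alpha, L_t)$. For the reference $L_0=j^1 f$, one can take $F_0(x,\xi)=f(x)+Q(\xi)$ for a non-degenerate quadratic form $Q$, and $c(\alpha, L_0)$ reduces to the ordinary minimax value of $f$ associated to $\alpha$; in particular $c([N], j^1 f)=\max f$ and $c(\mathrm{pt}, j^1 f)=\min f$.

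The core of the argument is a monotonicity lemma: along a strictly positive Legendrian isotopy, the function $t\mapsto c(\alpha, L_t)$ is strictly increasing. The value $c(\alpha, F_t)$ is realised as a critical value of $F_t$ at some fibre-critical point $(x_0(t),\xi_0(t))$; the corresponding point of $L_t$ is $p(t)=(x_0,\partial_x F_t,F_t)$, and a direct computation with the contact form $\alpha_{J^1}=dz-p\,dx$ gives $\alpha_{J^1}(\dot p(t))=\partial_t F_t(x_0(t),\xi_0(t))$, since the terms involving $\dot x_0$ cancel at a fibre-critical point. Positivity of the Legendrian isotopy therefore implies $\partial_t F_t>0$ at this critical point, and a standard minimax argument promotes this pointwise inequality to strict monotonicity of $c(\alpha, L_t)$.

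Assuming the monotonicity lemma, the theorem is immediate: $L_0=L_1$ forces $c(\alpha, L_0)=c(\alpha, L_1)$, contradicting strict increase. The hard part is the monotonicity step: it requires a continuous-in-$t$ selection of GFQIs along $\{L_t\}$, possibly after stabilisation and composition with fibre-preserving diffeomorphisms to remain well defined at the endpoint, together with the careful identification of $\partial_t F_t$ at a critical point with the Reeb component of the Legendrian velocity. It is worth noting that this argument rules out \emph{any} positive loop based at $j^1 f$, not just a contractible one, since the spectral numbers depend only on $L_t$ and not on the homotopy class of the loop.
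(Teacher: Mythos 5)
This survey states the Colin--Ferrand--Pushkar theorem without proof, citing \cite{colin2017positive}; your argument via generating functions quadratic at infinity and the strict monotonicity of the Viterbo--Chekanov spectral numbers $c(\alpha,L_t)$ under positive isotopies is essentially the original proof given there. The one step that genuinely deserves the care you flag is the monotonicity lemma: positivity of the isotopy only gives $\partial_t F_t>0$ on the fibre-critical locus, not on all of $N\times\R^k$, so one cannot simply compare sublevel sets; instead one invokes the standard fact that $t\mapsto c(\alpha,F_t)$ is Lipschitz with almost-everywhere derivative equal to $\partial_t F_t$ evaluated at a true critical point of $F_t$ on the critical level, where your computation identifying $\partial_t F_t$ with the Reeb component of the Legendrian velocity applies. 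With that lemma (and the Chekanov/Viterbo--Th\'eret existence and uniqueness results for GFQIs, which make $c(\alpha,\cdot)$ an honest invariant of the Legendrian with no additive ambiguity in $J^1(N)$) the proposal is complete, and, as you correctly note, it excludes all positive loops based at $j^1f$, not only contractible ones.
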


On the other hand, there are positive loops of Legendrians in $J^1(\mathbb{S}^1)$ that are not based at the 1-jet extensions of functions \cite[Theorem~3]{colin2017positive}. The following theorem by Chernov and Nemirovski \cite{chernov2010non} was used to show that $S^\ast M$ is orderable for every closed manifold $M$. This was an enhancement of \cite[Theorem~1.18]{eliashberg2006geometry} where it was shown that $S^\ast M$ is orderable if $\pi_1(M)$ is either finite or has infinitely many conjugacy classes.

\begin{theorem}[Eliashberg-Kim-Polterovich, Chernov-Nemirovski]\label{thm:ChN}
	Let $M$ be a smooth connected manifold of dimension at least 2. Assume that the universal cover of $M$ is an open manifold. Then, there are no non-negative Legendrian isotopies connecting two different fibres of $S^\ast M$. 
\end{theorem}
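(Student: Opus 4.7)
The approach, due to Chernov and Nemirovski, recasts the statement as a causality question in Lorentzian geometry. Endow $M$ with a complete Riemannian metric, lift it to a complete Riemannian metric $g$ on $\tilde M$, and form the Lorentzian manifold $\tilde X := \tilde M \times \R$ with the product metric $g - dt^2$. Completeness of $g$ guarantees that $\tilde X$ is globally hyperbolic with Cauchy surface $\tilde M \times \{0\}$. The space of future-oriented light rays of $\tilde X$ is canonically contactomorphic to $S^*\tilde M$: each light ray meets the Cauchy surface exactly once at some event $(\tilde q, 0)$, and the identification sends it to its spatial unit direction there. Under this identification, the \emph{sky} of $(\tilde q, 0)$, i.e. the Legendrian submanifold of light rays through the event, is precisely the fibre $S^*_{\tilde q}\tilde M$.

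Suppose, for contradiction, that a non-negative Legendrian isotopy $\{L_t\}$ in $S^*M$ satisfies $L_0 = S^*_{q_0}M$ and $L_1 = S^*_{q_1}M$ with $q_0 \neq q_1$. Regard $\{L_t\}$ as a smooth family of Legendrian embeddings $\iota_t : \mathbb{S}^{n-1} \to S^*M$, where $n = \dim M$, and lift it via the covering $S^*\tilde M \to S^*M$ to a family $\tilde\iota_t$ starting at a chosen lift $S^*_{\tilde q_0}\tilde M$ of $S^*_{q_0}M$. Path-lifting suffices: for $n \geq 3$ the parameter space $\mathbb{S}^{n-1} \times [0,1]$ is simply connected, while for $n = 2$ the monodromy around the fibre loop is trivial because each fibre lifts homeomorphically. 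The lift $\{\tilde L_t\}$ is a Legendrian isotopy in $S^*\tilde M$, and it remains non-negative because the covering map is a local contactomorphism preserving the coorientation; it ends at $S^*_{\tilde q_1}\tilde M$ for some lift $\tilde q_1$ of $q_1$, so $\tilde q_0 \neq \tilde q_1$.

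The crux is the \emph{Legendrian Low conjecture}, proved by Chernov and Nemirovski: a non-negative Legendrian isotopy between two skies $\mathfrak{S}(p)$ and $\mathfrak{S}(p')$ in the space of light rays of a suitably nice globally hyperbolic spacetime forces $p$ to lie causally in the past of $p'$. Their proof propagates the moving Legendrian $\tilde L_t$ back into $\tilde X$ as a time-dependent family of wave fronts emitted from $(\tilde q_0, 0)$: non-negativity ensures the wave fronts only advance in time, while global hyperbolicity provides the compactness needed to extract a future-directed causal curve from $(\tilde q_0, 0)$ to $(\tilde q_1, 0)$ in $\tilde X$. This is the geometric heart of the argument and the main obstacle. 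The hypothesis that $\tilde M$ is open enters here, through properties of the causal structure of $\tilde X$ that rule out the causal wrap-around exemplified by the round $\mathbb{S}^n$, on which the time-$\pi$ geodesic flow furnishes a non-negative isotopy between fibres over antipodal points.

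To finish, observe that any future-directed causal curve $\gamma(s) = (\tilde q(s), t(s))$ in $(\tilde X, g - dt^2)$ satisfies $|\dot{\tilde q}|_g^2 \leq \dot t^2$ with $\dot t \geq 0$, so $t$ is non-decreasing, and $\gamma$ is constant whenever $t$ is constant. Since both endpoints of $\gamma$ lie in $\tilde M \times \{0\}$, the curve must be constant, yielding $\tilde q_0 = \tilde q_1$, a contradiction.
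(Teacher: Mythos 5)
The survey states this theorem without proof, citing Chernov--Nemirovski \cite{chernov2010non} (and the sheaf-theoretic reproof in \cite{guillermou2012sheaf}), so there is no in-paper argument to compare against; I am assessing your sketch on its own terms. Your framing is the standard Low/Chernov--Nemirovski dictionary and the bookkeeping is fine: $S^\ast \tilde M$ is the space of future-directed light rays of the globally hyperbolic product $(\tilde M\times\R,\, g-dt^2)$, fibres correspond to skies of events on the Cauchy surface, the lift of the isotopy to the universal cover is unproblematic (easier than you make it --- a homotopy always lifts along a covering once its initial map does), and your closing observation that a future-directed causal curve with both endpoints on $\tilde M\times\{0\}$ is constant is correct.

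The gap is that the entire content of the theorem sits in the step you yourself label ``the geometric heart'' and then do not prove: that a non-negative Legendrian isotopy between two skies forces the corresponding events to be causally related. This is the Legendrian Low conjecture for these spacetimes, and in the literature the implication runs the other way --- Chernov and Nemirovski \emph{deduce} it from the $S^\ast M$ statement you are trying to prove --- so invoking it here is essentially circular. Moreover, your heuristic mislocates where the hypothesis on $M$ enters. The causal structure of $\tilde M\times\R$ does not see whether $\tilde M$ is open: the product spacetime is globally hyperbolic for any complete metric, and your final step works verbatim for $\tilde M=\mathbb{S}^n$; yet there the conclusion is false, since the time-$\pi$ cogeodesic flow is a positive Legendrian isotopy from the fibre over the north pole to the fibre over the south pole, while the events $(N,0)$ and $(S,0)$ are not causally related. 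So openness must be used inside the unproven implication ``non-negative isotopy of skies implies causally related,'' and nothing in the wave-front-propagation-plus-compactness picture explains how; that picture would ``prove'' the false statement for $\mathbb{S}^n$ just as readily. The known arguments close exactly this gap with hard machinery: the partial order on Legendrians in $1$-jet spaces via generating functions quadratic at infinity (using $S^\ast\R^n\cong J^1(\mathbb{S}^{n-1})$) together with a reduction exploiting openness of the universal cover in \cite{chernov2010non}, microlocal sheaf-theoretic non-displaceability in \cite{guillermou2012sheaf}, and holomorphic-curve methods for the partial result of \cite{eliashberg2006geometry}. Without supplying one of these, or a genuinely new argument for the key implication, your proposal restates the theorem rather than proving it.
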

In \cite{guillermou2012sheaf} (see Corollary~4.10), Guillermou, Kashiwara, and Schapira reprove the theorem using microlocal analysis. As a consequence of Theorem~\ref{thm:ChN}, there are no positive Legendrian loops on $S^\ast M$ starting at a fibre \cite[Corollary~8.1]{chernov2010non} if the universal cover of $M$ is an open manifold. Theorem~\ref{thm:ChN} generalizes previous results regarding manifolds covered by $\R^n$, see \cite[Theorem~2]{colin2017positive} and \cite[Corollary~6.2]{chernov2010legendrian}. An analogue of the theorem holds for conormal bundles of simply connected closed submanifolds of codimension at least 2 \cite[Theorem~4.9]{chernov2016universal}. There are manifolds $M$ that violate the conclusion of Theorem~\ref{thm:ChN}. For instance, if $M$ is a compact rank-one symmetric space, that is $\mathbb{S}^n, \R P^n, \mathbb{C}P^n, \mathbb{H}P^n,$ or the Cayley plane $\mathbb{O}P^2$,  then the cogeodesic flow on $S^\ast M$ eventually takes a fibre to another fibre. This is most easily seen on the example of $M=\mathbb{S}^n$: the cogeodesic flow at time $\pi$ takes the fibre above the north pole to the fibre above the south pole. On the other hand, there are never contractible positive Legendrian loops starting at a fibre of a sphere cotangent bundle \cite[Theorem~1.1]{chernov2016universal}.

\begin{theorem}[Chernov-Nemirovski]
	Let $M$ be a smooth manifold. Then, there does not exist a positive contractible Legendrian loop on $S^\ast M$ that starts at a fibre.
\end{theorem}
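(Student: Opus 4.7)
The plan is a contradiction argument structured around lifting to the universal cover $p:\tilde{M}\to M$. Suppose $\{L_t\}_{t\in[0,1]}$ is a positive contractible Legendrian loop based at the fibre $F_x=S^\ast_x M$, and let $\{L_t^s\}_{s\in[0,1]}$ be a contracting homotopy with $L_t^0\equiv F_x$ and $L_t^1=L_t$. The induced map $\tilde{p}:S^\ast\tilde{M}\to S^\ast M$ is a contact covering, so after choosing a lift $\tilde{x}\in p^{-1}(x)$ the fibre $F_{\tilde{x}}\subset S^\ast\tilde{M}$ covers $F_x$, and the full homotopy lifts uniquely to a homotopy $\{\tilde{L}_t^s\}$ in $\op{Leg}(S^\ast\tilde{M})$ based at $F_{\tilde{x}}$. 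The endpoints $\tilde{L}_1^s$ depend continuously on $s$ while all projecting to $L_1^s=F_x$, and hence land in the discrete set $\tilde{p}^{-1}(F_x)=\bigsqcup_i F_{\tilde{x}_i}$ of fibre lifts; since $\tilde{L}_1^0=F_{\tilde{x}}$ this forces $\tilde{L}_1^s=F_{\tilde{x}}$ for all $s$, and $\{\tilde{L}_t\}:=\{\tilde{L}_t^1\}$ is a positive Legendrian loop at $F_{\tilde{x}}$ (positivity is pointwise and is preserved under the lift).

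If $\tilde{M}$ is open (for instance, whenever $M$ is non-compact or has infinite fundamental group), then the universal cover of $\tilde{M}$ is $\tilde{M}$ itself, and the consequence of Theorem~\ref{thm:ChN} recorded in Corollary~8.1 of \cite{chernov2010non} already forbids positive Legendrian loops at a fibre of $S^\ast\tilde{M}$. This is the desired contradiction and disposes of all manifolds $M$ with non-compact universal cover.

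It remains to treat the case when $\tilde{M}$ is closed and simply connected, which is the genuine difficulty: the cogeodesic flow on $S^\ast\mathbb{S}^n$ shows that positive Legendrian loops at a fibre do exist in that situation, so the contradiction must actually exploit contractibility. I would construct an $\R$-valued function on the universal cover of the component of $F_{\tilde{x}}$ in the Legendrian isotopy space that is strictly increasing along the lift of any non-trivial non-negative Legendrian isotopy — a natural candidate being a spectral selector built from wrapped or Rabinowitz Floer homology of the Legendrian pair $(F_{\tilde{x}},F_{\tilde{x}})\subset T^\ast\tilde{M}$, with action measured by Reeb chord lengths. A contractible positive loop lifts to a closed loop in that universal cover, so the function must take equal values at its two endpoints; yet strict monotonicity along the positive lift forces the value to increase. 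This is the contradiction. The main obstacle is precisely this construction: producing a spectral invariant that is simultaneously strictly monotone under non-trivial non-negative Legendrian isotopies and well-defined on deck orbits in the universal cover of the Legendrian space, in a setting where the ambient Reeb flow on $S^\ast\tilde{M}$ can itself be periodic.
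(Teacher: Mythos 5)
Your opening reduction is sound and is indeed the standard first step: extend the loop to an ambient contact isotopy, lift it through the contact covering $S^\ast\tilde{M}\to S^\ast M$, and use the discreteness of $\tilde{p}^{-1}(F_x)$ together with the contracting homotopy to see that the lift closes up at $F_{\tilde{x}}$ and remains positive and contractible. Corollary~8.1 of \cite{chernov2010non}, i.e.\ the consequence of Theorem~\ref{thm:ChN}, then disposes of every $M$ with open universal cover. The problem is that everything after this point is not a proof but a plan with an acknowledged hole. The case you correctly isolate as the real content of the theorem --- $\tilde{M}$ closed and simply connected, where non-contractible positive loops based at a fibre genuinely exist (the cogeodesic flow on $S^\ast\mathbb{S}^n$) --- is delegated to a ``spectral selector'' that is never constructed, and you yourself name the obstruction (strict monotonicity on the universal cover of the Legendrian isotopy space in the presence of a periodic Reeb flow) without resolving it. As written, the argument establishes nothing beyond what \cite{chernov2010non} already gives.

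Two remarks on how the gap is actually closed. The cited proof \cite[Theorem~1.1]{chernov2016universal} stays soft: Chernov and Nemirovski derive the remaining case from the non-negative isotopy theorem (Theorem~\ref{thm:ChN}) by covering-space and product arguments, so no Floer-theoretic invariant has to be built. Alternatively, if you want a Floer-theoretic route, the missing case is already covered by the Chantraine--Colin--Rizell theorem quoted at the end of this survey \cite{chantraine2019positive}: the fibre $F_{\tilde{x}}=S^\ast_{\tilde{x}}\tilde{M}$ admits the cotangent fibre $T^\ast_{\tilde{x}}\tilde{M}\cap D^\ast\tilde{M}$ as an exact Lagrangian filling inside the Liouville domain $D^\ast\tilde{M}$, and the wrapped Floer homology of a cotangent fibre is isomorphic to the homology of the based loop space of $\tilde{M}$, hence non-vanishing; therefore $F_{\tilde{x}}$ lies in no contractible positive loop of Legendrians. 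Appealing to either of these results would complete your outline; inventing a new monotone selector from scratch is neither necessary nor, as you observe, obviously possible.
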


A similar statement holds for conormal bundles of a connected closed submanifold of codimension at least 2 \cite[Theorem~4.10]{chernov2016universal}. The following result by Frauenfelder, Labrousse, and Schlenk  \cite[Theorem~1.13]{frauenfelder2015slow} shows that the class of closed manifolds $M$ for which $S^\ast M$ admits a positive Legendrian loop starting at a fibre is very restricted.
	
\begin{theorem}[Frauenfelder-Labrousse-Schlenk]\label{thm:FLS}
	Let $M$ be a closed connected manifold of dimension at least 2. If $S^\ast M$ admits a positive Legendrian loop starting at a fibre, then $\pi_1(M)$ is finite and the cohomology ring $H^\ast(M;\mathbb{Z})$ is isomorphic to that of a compact rank-one symmetric space.
\end{theorem}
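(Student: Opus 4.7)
My plan is to prove the two conclusions --- finiteness of $\pi_1(M)$ and the cohomological rigidity --- in succession. The first is a quick corollary of Theorem~\ref{thm:ChN}; the second requires deeper dynamical input extracted from the positive loop.

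\emph{Finiteness of $\pi_1(M)$.} Suppose, for contradiction, that $\pi_1(M)$ is infinite. Since $M$ is a closed connected manifold of dimension $\geqslant 2$, the universal cover $\widetilde M$ is then non-compact, hence an open manifold. By the consequence of Theorem~\ref{thm:ChN} noted immediately after it (Corollary~8.1 of \cite{chernov2010non}), $S^\ast M$ admits no positive Legendrian loop based at a fibre --- contradicting the hypothesis. Thus $\pi_1(M)$ is finite.

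\emph{Cohomological rigidity.} Next, I would extend the given positive loop $\{L_t\}$, with $L_0 = L_1 = S^\ast_x M$, to a compactly supported positive contact isotopy $\phi_t \colon S^\ast M \to S^\ast M$ satisfying $\phi_t(S^\ast_x M) = L_t$; in particular, $\phi_1$ preserves $S^\ast_x M$ setwise and the generating contact Hamiltonian is strictly positive. The heart of the argument --- carried out in \cite{frauenfelder2015slow} --- then uses this positivity, together with iteration of $\phi_1$ and comparison with the cogeodesic flow of an auxiliary Riemannian metric on $M$, to prove that the number of geodesic chords from $x$ to $x$ of length $\leqslant T$ grows subexponentially in $T$. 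Equivalently, the Reeb flow on $S^\ast M$ has zero topological entropy and the based loop space $\Omega_x M$ has slow volume growth. Classical rigidity results in Morse theory on loop spaces (after Gromov, Paternain, and Vigu\'e-Poirrier) then force a closed manifold with finite fundamental group possessing such a slow-growth loop space to have the integral cohomology ring of a compact rank-one symmetric space, which is the desired conclusion.

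The principal obstacle is the middle step: converting the purely contact-topological input (positivity of a single loop of Legendrians based at one fibre) into the quantitative slow-growth estimate on chord counts. Positivity gives only one-sided, Hamiltonian-level information, whereas the conclusion is a global dynamical estimate over all iterates of $\phi_1$. Closing this gap, as in \cite{frauenfelder2015slow}, requires a delicate symplectic-topological argument: one uses the positive loop to bound the number of Hamiltonian chords of $S^\ast_x M$ inside $S^\ast M$ in terms of homotopy-theoretic data of the path $\phi_t$, and then identifies these chord counts with counts of geodesic loops at $x$. Without this bridge, one cannot invoke the loop-space rigidity theorems, and a naive attempt to apply Bott--Samelson directly fails because no Finsler metric with all geodesics closed need be produced at any intermediate stage.
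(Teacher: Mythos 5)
Your first step is sound: for a closed connected $M$ with infinite fundamental group the universal cover is non-compact, hence an open manifold, and the consequence of Theorem~\ref{thm:ChN} recorded in the text (Corollary~8.1 of \cite{chernov2010non}) excludes positive Legendrian loops based at a fibre, so $\pi_1(M)$ must be finite. Note only that Theorem~\ref{thm:ChN} itself forbids non-negative isotopies between two \emph{distinct} fibres, so you are genuinely relying on the stated corollary rather than on the theorem verbatim; that is legitimate, since the corollary is quoted in the survey.

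The second half breaks at its final step. The implication you invoke --- that subexponential growth of the chord counts, equivalently zero topological entropy or ``slow volume growth'' of $\Omega_x M$, forces the integral cohomology ring of a compact rank-one symmetric space --- is false, and none of the classical results of Gromov, Paternain, or Vigu\'e-Poirrier delivers it. The manifold $\mathbb{S}^2\times\mathbb{S}^2$ is simply connected, its product geodesic flow is integrable with zero topological entropy, and the Betti numbers of its based loop space satisfy $b_k(\Omega(\mathbb{S}^2\times\mathbb{S}^2))=k+1$, so all growth is polynomial; yet its cohomology ring is not that of a CROSS. More generally, every rationally elliptic manifold has subexponential loop-space growth, and even the assertion that zero entropy implies rational ellipticity is an open conjecture of Paternain, not a theorem one can cite. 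What a positive Legendrian loop of period $T$ actually yields in \cite{frauenfelder2015slow} is much sharper: it forces the filtered homology of $\Omega_x M$ (equivalently, of the Lagrangian Floer persistence module of the fibre) to be essentially $T$-periodic, so that $\sum_{k\leqslant N} b_k(\Omega_x M)$ grows at most \emph{linearly} in $N$ over every coefficient field. It is this linear bound, fed into a Bott--Samelson/McCleary-type algebraic argument (a simply connected space whose cohomology over some field needs two generators has loop-space Betti number growth strictly faster than linear), that pins down the monogenic, hence CROSS, integral cohomology ring. So the ``bridge'' you defer to the original paper must produce a linear estimate, not merely a subexponential one; with only the slow-growth conclusion you state, the ring-theoretic rigidity cannot be recovered.
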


Along these lines, Dahinden \cite{dahinden2018bott} showed that $M$ has to be either simply connected or homotopy equivalent to $\R P^n$ if, in the situation of Theorem~\ref{thm:FLS}, the Legendrian at which the positive loop is based is disjoint form all the other Legendrians in the loop. In \cite{chantraine2019positive}, Chantraine, Colin, and Rizell provide plenty of non-existence results for positive Legendrian loops using Floer-theoretic methods. In particular, they prove the following `open-string' analogue of Example~\ref{ex:SH}.

\begin{theorem}[Chantraine-Colin-Rizell]
	Let $W$ be a Liouville domain and  let $L\subset \partial W $ be a Legendrian that admits an exact Lagrangian filling $N\subset W$. If the wrapped Floer homology of $N$ is non-vanishing, then the Legendrian $L$ is not contained in a contractible positive loop of Legendrians.
\end{theorem}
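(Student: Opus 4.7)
The strategy is a standard Floer-theoretic argument combining continuation maps on wrapped Floer homology with an action-shift produced by positivity. Suppose for contradiction that $L$ lies in a contractible positive loop $\{L_t\}_{t\in[0,1]}$ of Legendrian submanifolds of $\partial W$.

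First I would extend the Legendrian loop to an ambient Hamiltonian deformation. By the Legendrian isotopy extension theorem, $\{L_t\}$ can be realized as $L_t=\phi_t(L)$ for a compactly supported contact isotopy of a neighbourhood of $\partial W$; lifting to the symplectization and interpolating to the identity on a compact set of $\hat W$, I obtain a Hamiltonian isotopy $\psi_t$ of the completion $\hat W$ that moves the exact Lagrangian filling $N$ through a family $N_t=\psi_t(N)$ of exact Lagrangian fillings of $L_t$. Positivity of the Legendrian loop translates, after rescaling, into positivity of the slope of the generating Hamiltonian along the Reeb direction near infinity, uniformly in $t$.

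Next I would set up continuation maps. The family $\{N_t\}$ together with a cofinal sequence of wrapping Hamiltonians induces a continuation endomorphism $\Phi:HW(N,N)\to HW(N,N)$. Because the loop is \emph{contractible} through loops of Legendrians based at $L$, one can concatenate the associated family of continuation data with a null-homotopy and appeal to homotopy invariance of continuation maps for Legendrian-boundary Lagrangians (in the spirit of Chantraine--Colin--Rizell) to conclude $\Phi=\mathrm{id}$ on $HW(N,N)$. Then, positivity of $\{L_t\}$ implies that at the chain level $\Phi$ is represented by a map $\varphi:CW(N)\to CW(N)$ that strictly shifts the Hamiltonian action filtration by some $\varepsilon>0$. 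Iterating the loop $k$ times gives again a contractible positive loop, so $\Phi^k=\mathrm{id}$ while on the chain level $\varphi^k$ shifts action by at least $k\varepsilon-C$ for a constant $C$ independent of $k$.

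To close the argument I pick a nonzero class $[\alpha]\in HW(N,N)$, which by hypothesis exists, and represent it by a cycle of bounded action $A$. For $k$ large enough that $k\varepsilon-C>A$, the image $\varphi^k(\alpha)$ lies entirely in action above $A$, so it is zero in the truncated complex $CW^{\leqslant A}(N)$, while the equality $\Phi^k=\mathrm{id}$ forces it to represent $[\alpha]$ in every sufficiently large action window. Passing to the limit yields $[\alpha]=0$, the desired contradiction.

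The main obstacle is step two: showing rigorously that contractibility of the Legendrian loop (as opposed to contractibility of an ambient loop of contactomorphisms) suffices for $\Phi=\mathrm{id}$. This requires promoting the contraction of the loop in the space of Legendrians to a compatible homotopy of continuation data for the wrapped Floer complex, and verifying that the monodromy along a null-homotopic loop of Floer data is chain-homotopic to the identity. Once this naturality statement is established, the action-shift and iteration argument is routine.
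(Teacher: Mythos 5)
This survey only quotes the theorem from \cite{chantraine2019positive} and gives no proof, so there is no in-paper argument to measure you against; I am judging your sketch against the argument in the cited work. Your outline does reproduce the expected skeleton --- contractibility should trivialize the monodromy on $HW(N,N)$, positivity should shift the action filtration by a definite amount, and iteration then contradicts non-vanishing --- and this is indeed the mechanism behind the Chantraine--Colin--Dimitroglou Rizell proof (and behind the closed-string analogue for $SH$). But the step you yourself flag as ``the main obstacle'' is not a deferrable technicality: it is the mathematical content of the theorem, and your proposed route to it does not obviously close. The null-homotopy of the loop necessarily passes through loops of Legendrians that are \emph{not} positive (otherwise the constant loop would be positive), so establishing $\Phi=\operatorname{id}$ forces you to run continuation maps over non-monotone homotopies of Floer data on a Liouville completion, where the maximum principle and the action estimates needed for compactness of the continuation moduli spaces are not automatic. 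This is precisely why the cited paper does not argue via naive monodromy of the wrapped complex but instead develops a Floer theory for exact Lagrangian cobordisms in the symplectization (the Cthulhu complex), encodes the loop and its contraction as cobordisms, and extracts both the invariance statement and the action shift from that framework. Until you either prove the naturality statement you describe or import such a formalism, the proof is incomplete at its central point.

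Two further places need tightening even granting that step. First, the ambient contact isotopy produced by (parametric) Legendrian isotopy extension has contact Hamiltonian positive only \emph{along} $L_t$, not ``near infinity,'' so the uniform shift $\varepsilon$ must be derived from the action of Reeb chords with endpoints on $L_t$, not from an ambient slope; the estimate survives, but not for the reason you give. Second, the endgame is cleaner if you run it on a class known to live in a fixed finite action window whose shifted truncation literally vanishes --- e.g.\ the unit of $HW(N,N)$, which is the image of the fundamental class of $N$ and is nonzero by hypothesis --- rather than an arbitrary class ``represented by a cycle of bounded action,'' since in the direct limit $HW=\varinjlim_a HW^{<a}$ one needs the class to factor through a truncation that is provably zero after $k$ iterations, and one must fix sign conventions to know which truncation that is.
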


The following result by Liu \cite{liu2016thesis,liu2020positive} is well in line with flexible behaviour of loose Legendrians. See \cite{pancholi2018simple} for further constructions of positive Legendrian loops, that in particular partially recover the result of Liu.

\begin{theorem}[Liu]
Let $M$ be a contact manifold of dimension at least 5 and let $L\subset M$ be a loose Legendrian. Then, there exists a contractible positive loop of Legendrians containing $L$.
\end{theorem}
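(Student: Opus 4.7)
The strategy combines the contractible positive loop of contactomorphisms of the standard sphere (Lemma~\ref{lem:EposloopS}) with Murphy's h-principle for loose Legendrians. Writing $\dim M = 2n-1$, the hypothesis $\dim M \geqslant 5$ gives $n \geqslant 3$, so Lemma~\ref{lem:EposloopS} supplies a contractible positive loop $\{\varphi_t\}_{t \in [0,1]}$ of contactomorphisms of $\mathbb{S}^{2n-1}$. For any Legendrian $\Lambda_0 \subset \mathbb{S}^{2n-1}$ disjoint from a suitably chosen fixed point $p_0$, the orbit $\{\varphi_t(\Lambda_0)\}$ is a contractible positive loop of Legendrians through $\Lambda_0$; this will serve as the local model.

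The plan is to transfer this model to $L$ by exploiting the flexibility of loose Legendrians. By Murphy's h-principle, two loose Legendrians sharing the same formal data are Legendrian isotopic, and one may in particular perform arbitrary local stabilizations of $L$ inside a Darboux chart $B\subset M$ without changing its Legendrian isotopy class. I would use this to arrange, up to a contact isotopy of $M$, that the intersection $L \cap B$ matches a prescribed piece of $\Lambda_0 \subset \mathbb{S}^{2n-1}$ under a Darboux identification $B \cong \mathbb{S}^{2n-1}\setminus\{p_0\}$. Pulling back $\varphi_t$ (which is supported away from $p_0$) then yields a compactly supported contact isotopy $\tilde{\varphi}_t: M \to M$ whose orbit moves $L\cap B$ positively.

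Since $\tilde{\varphi}_t$ fixes $L \setminus B$, the orbit $\{\tilde{\varphi}_t(L)\}$ is a priori only a non-negative loop. To upgrade to strict positivity, I would superimpose a small, globally positive Reeb-direction push on $L$ generated by a contact vector field of the form $fR$ with $f>0$ along $L$, and then absorb the resulting end-displacement back to $L$ using another application of Murphy's h-principle inside the loose chart. This closing-up step is the Legendrian-loop analog of the Hamiltonian composition $(E/R)\#F^s$ used in the squeezing argument earlier in this section. Contractibility of the final Legendrian loop is inherited from the contractibility of $\{\varphi_t\}$ in $\op{Cont}(\mathbb{S}^{2n-1})$ together with the triviality of the auxiliary Reeb motion.

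I expect the main obstacle to be strict positivity of the Legendrian isotopy at every point of $L$ simultaneously with closing-up into a loop: the natural ``return'' from a Reeb push is typically negative, and positivity can be restored only by taking advantage of the topological room that looseness provides. This is precisely where Murphy's parametric h-principle for loose Legendrians should enter, allowing the negative return path to be replaced by a positive detour through the loose chart and matched smoothly with the rotation generated by $\tilde{\varphi}_t$ inside $B$.
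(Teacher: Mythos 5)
Your overall instinct---reduce to a local model and let looseness repair the signs---points in the right direction, and the survey itself offers no proof to compare against (the theorem is quoted from Liu's work). But two steps in your sketch fail as stated. First, the transplantation of the positive loop of Lemma~\ref{lem:EposloopS} into a Darboux chart $B\cong\mathbb{S}^{2n-1}\setminus\{p_0\}$ is impossible: a positive loop of contactomorphisms has a nowhere-vanishing contact Hamiltonian, hence fixes no point of the sphere, so $\{\varphi_t\}$ is never ``supported away from $p_0$'' and does not extend by the identity to a compactly supported isotopy of $M$. This is not a removable technicality: as recalled in Section~\ref{sec:ord}, by \cite{bhupal2001partial} there are \emph{no} non-trivial non-negative loops of compactly supported contactomorphisms of the standard $\R^{2n+1}$, so no localization of an ambient positive loop into a Darboux ball can succeed. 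Whatever positive motion you impose on $L\cap B$ must die near $\partial B$, and the resulting loop is at best non-negative and stationary on $L\setminus B$.

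Second, and more seriously, the ``closing-up'' step carries the entire content of the theorem and is not delivered by Murphy's h-principle. Murphy's theorem produces a Legendrian isotopy between loose Legendrians with matching formal data, but it gives no control whatsoever on the sign of the contact Hamiltonian along that isotopy; the return stroke after a Reeb push is genuinely negative, and asserting that the h-principle replaces it by a positive detour is precisely the statement to be proved. Liu's actual argument does not pass through Lemma~\ref{lem:EposloopS} at all: it constructs explicit positive loops for the stabilized (zig-zag) model inside the loose chart, shows that the wiggling there can absorb the negative return of the complementary piece, and then treats the general case by a parametric, positivity-respecting refinement of the h-principle (compare the constructions in \cite{casals2016strong} and \cite{liu2020positive}); contractibility is arranged within that construction rather than inherited from a spherical loop. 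To turn your proposal into a proof you would need to formulate and establish such an ``h-principle for positive loops''---which is essentially the theorem itself.
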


\section*{Acknowledgements}
	I am grateful to Roger Casals and Leonid Polterovich for valuable feedback.
\printbibliography
\end{document}